\documentclass[12pt]{article} 

\usepackage{amssymb}
\usepackage{amsmath} 
\usepackage{eucal}
\usepackage{amsfonts}
\usepackage{amsthm}
\usepackage[colorlinks]{hyperref} 

\setlength{\textwidth}{6in} \setlength{\textheight}{8.79in}
\setlength{\oddsidemargin}{0.35in} \setlength{\evensidemargin}{0in}
\setlength{\topmargin}{-0.7in} \setlength{\columnsep}{0.4in}

\newtheorem{thm}{Theorem}[section]
\newtheorem{cor}[thm]{Corollary}
\newtheorem{lem}[thm]{Lemma}
\newtheorem{prop}[thm]{Proposition}

\theoremstyle{definition}
\newtheorem{defn}[thm]{Definition}
\newtheorem{ex}[thm]{Example}

\newcommand{\HH}{\mathcal{H}}
\newcommand{\K}{\mathcal{K}}

\newcommand{\E}{\mathcal{E}}

\newcommand{\R}{\mathbb{R}}
\newcommand{\C}{\mathbb{C}}
\newcommand{\N}{\mathbb{N}}

\newcommand{\leqs}{\leqslant}
\newcommand{\geqs}{\geqslant}

\newcommand{\ap}{\alpha}
\newcommand{\bt}{\beta}

\newcommand{\ep}{\epsilon}
\newcommand{\ld }{\lambda}

\newcommand{\norm}[1]{\lVert #1 \rVert}

\begin{document}

\centerline {\Large{\bf Operator Monotone Functions: Characterizations}}

\centerline{}

\centerline{\Large{\bf and Integral Representations}}

\centerline{}

\centerline{\bf {Pattrawut Chansangiam
\footnote{Email: kcpattra@kmitl.ac.th, Tel. +66 84 0777581 \\
Address: Department of Mathematics, Faculty of Science, King Mongkut's Institute of Technology Ladkrabang,
Chalongkrung Rd., Ladkrabang, Bangkok 10520, THAILAND.}}}

\centerline{}



\noindent \textbf{Abstract:} Operator monotone functions, introduced by L\"{o}wner in 1934, are an important class of real-valued functions.
They arise naturally in matrix and operator theory and have various applications in other branches of mathematics
and related fields. This concept is closely related to operator convex/concave functions.
In this paper, we provide their important examples and characterizations in terms of matrix of divided differences.
Various characterizations and the relationship between operator monotonicity and operator convexity are given by Hansen-Pedersen characterizations.
Moreover, operator monotone functions
on the nonnegative reals have special properties, namely, they admit integral representations with respect to suitable
Borel measures. \\

\noindent \textbf{Keywords:}  operator monotone function, operator convex/concave functions,
spectral resolution, functional calculus, Borel measure.


\section{Introduction}

A useful and important class of real-valued functions is the class of operator monotone functions.
Such functions were introduced by L\"{o}wner in a seminal paper \cite{Lowner}.
These functions are functions of Hermitian matrices/operators preserving order.
In that paper, he established a relationship between operator monotonicity, the positivity of matrix of divided differences and an important class of analytic functions, namely, Pick functions.
This concept is closely related to operator convex/concave functions which was studied afterwards by Kraus in \cite{Kraus}.
Operator monotone functions and operator convex/concave functions
arise naturally in matrix and operator inequalities (e.g. \cite{Ando}, \cite{Bhatia},
\cite{Bhatia_positive def matrices}, \cite{Zhan}).
This is because the theory of inequalities depends heavily on the concepts of monotonicity, convexity and concavity.
One of the most beautiful and important results in operator theory is the so-called L\"{o}wner-Heinz
inequality (see \cite{Heinz}, \cite{Lowner}) which is equivalent to the operator monotonicity of the function $t \mapsto t^p$ for $t \geqs 0$ when $p \in [0,1]$.
Operator monotone functions have applications in many areas, including mathematical physics
and electrical engineering.
They arise in analysis of electrical networks (see e.g. \cite{Anderson-Trapp}).
They also occur in problems of elementary particles (see e.g. \cite{Wigner-von Neumann}).
Operator monotone functions play major roles in the so-called Kubo-Ando theory of operator connections and operator means.
This axiomatic theory was introduced in \cite{Kubo-Ando} and play important role in operator inequalities, operator equations, network theory and quantum information theory.
Indeed, there is a one-to-one correspondence between operator monotone functions
on the nonnegative reals $\R^+$ and operator connections.
See more information in \cite{Bhatia}, \cite{Donoghue}
and \cite{Hiai-Yanagi}.

In this paper, we survey significant results of operator monotone functions.
We give various characterizations in terms of matrix of divided differences.
Hansen-Pedersen characterizations provide characterizations and relationship of operator monotonicity and operator convexity.
Every operator monotone function on the nonnegative reals always occurs as an integral
of suitable operator monotone functions with respect to a Borel measure.
Such functions form building blocks for arbitrary operator monotone functions on $\R^+$.

Here is the outline of the paper. In Section 2, after setting basic notations, we give the definitions
and examples of operator monotone/convex functions and provide their characterizations with respect to
matrix of divided differences. Section 3 deals with Hansen-Pedersen characterizations of operator monotone/convex functions.  We consider operator monotone functions on the nonnegative reals in Section 4.

\section{Operator monotonicity and convexity}

Let $M_n$ be the algebra of $n \times n$ complex matrices.
The spectrum or the set of eigenvalues of $A \in M_n$ is denoted by $\sigma(A)$.
The set of $n \times n$ Hermitian matrices is written as $M_n^{sa}$.
The set of $n \times n$ positive semidefinite matrices is written by $M_n^+$.
The real vector space $M_n^{sa}$ is naturally equipped with a partial order as follows.
For $A,B \in M_n^{sa}$, define $A \leqs B$ if and only if $B-A$ belongs to its positive cone $ M_n^+$.
We write $A > 0$ to means that $A$ is a positive definite matrix, or equivalently,
$A \geqs 0$ and $A$ is invertible.

Let $A \in M_n$ be normal. Then there exist distinct scalars $\ld_1, \dots, \ld_m \in \C$
and projections $P_1, \dots, P_m$ on $\C^n$ such that
		\begin{align*}
				A = \sum_{i=1}^m \ld_i P_i, \quad P_i P_j = 0 \text{ for } i \neq j,
				\quad \sum_{i=1}^m P_i = I.
		\end{align*}
Moreover, these scalars and projections are uniquely determined.
In fact, $\sigma(A) = \{ \ld_1, \dots, \ld_m \}$ and each $P_i$ is the projection onto the eigenspace
$\ker (A - \ld_i I)$.
This decomposition is called the spectral resolution of $A$.
Consider a function $f: \sigma(A) \to \C$.
From the spectral resolution of $A$, we can define the functional calculus of the function $f$
by
	\begin{align*}
		f(A) = \sum_{i=1}^m f(\ld_i) P_i.
	\end{align*}
When $f$ is a polynomial, this definition coincides with the usual definition.
\begin{defn}
	Let $J \subseteq \R$ be an interval. A function $f: J \to \R$ is said to be
	\begin{itemize}
		\item	\emph{matrix monotone of degree $n$} or \emph{$n$-monotone} if, for every $A,B \in M_n^{sa}$ with
					$\sigma(A), \sigma (B) \subseteq J  $,
					\begin{align*}
						A \leqs B \quad \Longrightarrow \quad f(A) \leqs f(B).
					\end{align*}
		\item	\emph{operator monotone} or \emph{matrix monotone} if it is $n$-monotone for every $n \in \N$.
		\item	\emph{matrix convex of degree $n$} or \emph{$n$-convex} if, for every $A,B \in M_n^{sa}$ with
					$\sigma(A), \sigma (B) \subseteq J  $,
					\begin{align}
						f(tA + (1-t)B) \quad \leqs \quad tf(A) + (1-t)f(B). \label{eq: def of matrix convex}
					\end{align}
		\item	\emph{operator convex} if it is $n$-convex for every $n \in \N$.
		\item	\emph{matrix concave of degree $n$} or \emph{$n$-concave} if $-f$ is $n$-convex.
		\item	\emph{operator concave} if it is $n$-concave for every $n \in \N$.
	\end{itemize}
\end{defn}

Recall that a continuous function $f: J \to \R$ is convex (concave) if and only if it is midpoint-convex (midpoint-concave, respectively).
By passing this fact to the functional calculus, a continuous function
$f: J \to \R$ is $n$-convex if and only if it is $n$-midpoint convex,
i.e. \eqref{eq: def of matrix convex} holds for $t=1/2$.
In particular, if $f$ is continuous, then $f$ is operator convex if and only if it is operator midpoint-convex.
Analogous results are applied for the case of concavity.

Every $n$-monotone function is $(n-1)$-monotone but the converse is false in general.
The condition of being $1$-monotone is the monotone increasing in usual sense.
The set of operator monotone functions on $J$ is closed under taking nonnegative linear combinations,
pointwise limits and compositions.
The straight line $t \mapsto mt+c$ is operator concave and operator convex on the real
line for any $m,c \in \R$. This function is operator monotone if and only if the slope $m$ is nonnegative.

\begin{prop}	\label{Lemma 2.5.5} 
	On $(0,\infty)$, the function $t^{-1}$ is operator convex and $-t^{-1}$ is operator monotone.
	On $(-\infty, 0)$, the function $t^{-1}$ is operator concave and $-t^{-1}$ is operator monotone.
\end{prop}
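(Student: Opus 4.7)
The plan is to establish both claims on $(0,\infty)$ directly and then transfer them to $(-\infty,0)$ via the substitution $A \mapsto -A$.

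For the monotonicity of $-t^{-1}$ on $(0,\infty)$, given $0 < A \leqs B$, I would conjugate by $A^{-1/2}$ to rewrite the inequality as $I \leqs A^{-1/2} B A^{-1/2}$. The right-hand side is a positive operator with spectrum in $[1,\infty)$; since the scalar map $s \mapsto s^{-1}$ sends $[1,\infty)$ into $(0,1]$, functional calculus gives $(A^{-1/2} B A^{-1/2})^{-1} = A^{1/2} B^{-1} A^{1/2} \leqs I$. Conjugating back by $A^{-1/2}$ yields $B^{-1} \leqs A^{-1}$, i.e.\ $-A^{-1} \leqs -B^{-1}$.

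For the operator convexity of $t^{-1}$ on $(0,\infty)$, I would appeal to the remark preceding the proposition: since $t^{-1}$ is continuous, it suffices to verify midpoint convexity, namely $4(A+B)^{-1} \leqs A^{-1} + B^{-1}$ for $A,B > 0$. With $T := A^{-1/2} B A^{-1/2} > 0$, the identities $A + B = A^{1/2}(I+T)A^{1/2}$ and $A^{1/2} B^{-1} A^{1/2} = T^{-1}$ (obtained from $B = A^{1/2} T A^{1/2}$) let me conjugate the inequality by $A^{1/2}$ into the equivalent form
\[
  4(I+T)^{-1} \leqs I + T^{-1}.
\]
Spectral calculus on $T$ then reduces this to the scalar inequality $4/(1+t) \leqs 1 + 1/t$ for $t > 0$, which upon clearing denominators is $(t-1)^2 \geqs 0$.

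For the statements on $(-\infty,0)$, set $A' = -A$, $B' = -B$. If the spectra of $A,B$ lie in $(-\infty,0)$, then $A', B' > 0$, and one has $A^{-1} = -(A')^{-1}$, while the order reverses in the sense $A \leqs B \Longleftrightarrow B' \leqs A'$, and convex combinations satisfy $tA + (1-t)B = -(tA'+(1-t)B')$. Substituting these identities into the two statements already proved on $(0,\infty)$ produces precisely the claimed concavity of $t^{-1}$ and operator monotonicity of $-t^{-1}$ on $(-\infty,0)$. The only nontrivial move is the conjugation $T = A^{-1/2} B A^{-1/2}$ in the convexity step, which collapses a two-variable operator inequality to a one-variable spectral one; after that the remainder is routine scalar algebra and sign-tracking.
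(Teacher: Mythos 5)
Your proof is correct and follows essentially the same route as the paper: the convexity argument uses the identical conjugation $T=A^{-1/2}BA^{-1/2}$ to reduce midpoint convexity to the scalar inequality $4/(1+t)\leqs 1+1/t$, and the $(-\infty,0)$ case is handled by the same substitution $A\mapsto -A$. The only difference is that you supply a full functional-calculus proof of the order-reversal $A\geqs B>0\Rightarrow A^{-1}\leqs B^{-1}$, which the paper simply asserts as known.
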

\begin{proof}
	If $A \geqs B > 0$, then $A^{-1} \leqs B^{-1}$ and hence $-A^{-1} \geqs -B^{-1}$.
	The scalar inequality $[(1+t)/2] ^{-1} \leqs (1 + t^{-1})/2$ implies that for every $C>0$
	\begin{align*}
		\left( \frac{I+C}{2} \right)^{-1}  \leqs  \frac{I + C^{-1}}{2}.
	\end{align*}
	For $A,B>0$ in $M_n$, by setting $C = A^{-1/2} B A^{-1/2}$ we have
	\begin{align*}
			\left( \frac{A+B}{2} \right)^{-1}
				&= \left( \frac{A^{1/2} (I + C) A^{1/2}}{2} \right)^{-1} \\
				&=	A^{-1/2} \left(  \frac{I+C}{2} \right)^{-1} A^{-1/2} \\
				&\leqs A^{-1/2} \left( \frac{I + C^{-1}}{2} \right) A^{-1/2}
				=	\frac{A^{-1} + B^{-1}}{2}.
	\end{align*}
	Hence $t^{-1}$ is operator convex. For the case $(-\infty, 0)$, consider $-A$ and $-B$ instead of $A$ and $B$.
\end{proof}

It follows from this proposition that for $c \notin (a,b)$ the function $t \mapsto (c-t)^{-1}$ is operator monotone on $(a,b)$.
The next result is called the L\"{o}wner-Heinz inequality. It was first proved by L\"{o}wner \cite{Lowner} and
also  by Heinz \cite{Heinz}. There are many proofs of this fact. The following is due to Pedersen \cite{Pedersen}.

\begin{thm}
	For $A \geqs B$ in $M_n^+$ and $r \in [0,1]$, we have $A^r \geqs B^r$.
\end{thm}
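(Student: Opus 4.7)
The plan is to deduce the inequality from Proposition~\ref{Lemma 2.5.5} via an integral representation of the power function. First I would reduce to the case $A > 0$ by replacing $A$ with $A + \ep I$ and letting $\ep \to 0^+$, using continuity of the finite-dimensional functional calculus; the boundary cases $r = 0, 1$ are immediate. So assume $r \in (0,1)$ and $A \geqs B > 0$.

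The key ingredient is the Beta-function identity
\[
\lambda^r \;=\; \frac{\sin \pi r}{\pi} \int_0^\infty \frac{\lambda\, s^{r-1}}{\lambda + s}\, ds, \qquad \lambda > 0,\ r \in (0,1),
\]
obtained by the substitution $s = \lambda u$ together with Euler's reflection formula $\Gamma(r)\Gamma(1-r) = \pi/\sin \pi r$. Applying it spectrally through the decomposition $A = \sum_i \lambda_i P_i$ yields the operator identity
\[
A^r \;=\; \frac{\sin \pi r}{\pi} \int_0^\infty s^{r-1}\, A(A+sI)^{-1}\, ds,
\]
and similarly for $B^r$. Using the elementary identity $A(A+sI)^{-1} - B(B+sI)^{-1} = s\left[(B+sI)^{-1} - (A+sI)^{-1}\right]$, the subtraction yields
\[
A^r - B^r \;=\; \frac{\sin \pi r}{\pi} \int_0^\infty s^r \left[ (B+sI)^{-1} - (A+sI)^{-1} \right] ds.
\]
By Proposition~\ref{Lemma 2.5.5} (specifically, the remark that $t \mapsto (c-t)^{-1}$ is operator monotone for $c \notin (a,b)$, applied with $c = -s$), the hypothesis $A + sI \geqs B + sI > 0$ gives $(A+sI)^{-1} \leqs (B+sI)^{-1}$ for every $s > 0$, so the bracketed integrand is positive semidefinite. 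Since $\sin(\pi r)/\pi > 0$ on $(0,1)$, we conclude $A^r \geqs B^r$.

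The step I expect to require the most care is the passage from the scalar Beta identity to the operator integral representation. In our finite-dimensional setting this is routine: via the spectral resolution everything reduces to a finite collection of convergent scalar integrals, the integrand $s^{r-1} \lambda_i (\lambda_i + s)^{-1}$ being of order $s^{r-1}$ near $0$ and $s^{r-2}$ near $\infty$, both integrable for $r \in (0,1)$. The real content of the argument is the positivity in the final integrand, and this is inherited entirely from Proposition~\ref{Lemma 2.5.5}; the rest is integration of a positive matrix-valued function against a positive scalar measure.
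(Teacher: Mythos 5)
Your proof is correct, but it follows a genuinely different route from the paper's. The paper reproduces Pedersen's argument: one considers the set $\triangle = \{p \in \mathbb{R} : A^p \geqslant B^p\}$, observes that it is closed and contains $0$ and $1$, and shows it is stable under taking midpoints via the norm estimate $\lVert B^{p/2}A^{-p/2}\rVert \leqslant 1$ together with the invariance of the spectral radius under commutation, $r(XY)=r(YX)$; density of the dyadic rationals in $[0,1]$ then finishes the proof. You instead invoke the Beta-function representation $\lambda^r = \frac{\sin \pi r}{\pi}\int_0^\infty \frac{\lambda\, s^{r-1}}{\lambda+s}\,ds$ and reduce everything to the operator monotonicity of $-t^{-1}$ from Proposition~\ref{Lemma 2.5.5}; all of your intermediate identities and convergence estimates check out. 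What each approach buys: Pedersen's proof is entirely elementary and self-contained (no special functions, no integrals), which is presumably why the paper chose it for Section 2; your proof is conceptually aligned with Section 4, since it exhibits $t^r$ as an integral of the building-block functions $t \mapsto t(t+s)^{-1}$ against a positive measure --- exactly the structure that Theorem~\ref{Theorem 2.7.6} establishes for every operator monotone function on $\mathbb{R}^+$, and the paper's closing Example records precisely this representing measure for $t^p$. One minor remark: your $\epsilon$-perturbation to reach $A>0$ is harmless but unnecessary, since the scalar representation also holds at $\lambda = 0$ (both sides vanish) and the resolvent comparison $(A+sI)^{-1} \leqslant (B+sI)^{-1}$ only requires $s>0$, so the argument runs verbatim for $A \geqslant B \geqslant 0$.
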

\begin{proof}
	The continuity argument allows us to consider $A \geqs B >0$.
Since $p \mapsto A^p$ and $p \mapsto B^p$ are continuous, the set
\begin{align*}
	\triangle = \{p \in \R : A^p \geqs B^p\}
\end{align*}
is closed.
Clearly, $0,1 \in \triangle$.
Hence, to prove that $[0,1] \subseteq \triangle$, it suffices to show that
\begin{align*}
	p,q \in \triangle  \implies \frac{p+q}{2} \in \triangle.
\end{align*}
Here, we use the fact that the set of dyadic numbers in $[0,1]$ is dense in $[0,1]$.
	Suppose $A^p \geqs B^p$ and $A^q \geqs B^q$.
	Then $A^{-p/2} B^p A^{-p/2} \leqs I$ and
	\begin{align*}
		\norm{B^{p/2}A^{-p/2}}^2 = \norm{(B^{p/2}A^{-p/2})^* (B^{p/2} A^{-p/2})} =
        \norm{A^{-p/2} B^p A^{-p/2}} \leqs 1.
	\end{align*}
	Hence, $\norm{B^{p/2} A^{-p/2}} \leqs 1$ and similarly $\norm{B^{q/2} A^{-q/2}} \leqs 1$.
	Thus
	\begin{align*}
		1 &\geqs \norm{(B^{p/2}A^{-p/2})^* (B^{q/2} A^{-q/2})} = \norm{A^{-p/2} B^{(p+q)/2} A^{-q/2}} \\
			&\geqs r(A^{-p/2} B^{(p+q)/2} A^{-q/2}) \\
			&=		r(A^{-(p+q)/4} B^{(p+q)/2} A^{-(p+q)/4}) \\
			&=		\norm{A^{-(p+q)/4} B^{(p+q)/2} A^{-(p+q)/4}}.
	\end{align*}
    Here, $r(\cdot)$ denotes the spectral radius.
	Now, $I \geqs A^{-(p+q)/4} B^{(p+q)/2} A^{-(p+q)/4}$ or $A^{(p+q)/2} \geqs B^{(p+q)/2}$,
    i.e. $(p+q)/2 \in \triangle$.
\end{proof}

\begin{prop} 
	For each $p >1$, the function $t \mapsto t^p$ is not operator monotone on $\R^+$.
	\begin{proof}
	Consider
	$A = \begin{bmatrix} 	3/2 & 0 \\  0 & 3/4 \end{bmatrix}$
	and $B = \begin{bmatrix} 	1/2 & 1/2 \\  1/2 & 1/2 \end{bmatrix}$.
	Then $A \geqs B \geqs 0$.
	Since $B$ is a projection, for each $p>0$ we have $B^p = B$ and
	\begin{align*}
			A^p - B^p = \begin{bmatrix} 	(3/2)^p - 1/2 & -1/2 \\  -1/2 & (3/4)^p - 1/2 \end{bmatrix}.
	\end{align*}
	Compute
	\begin{align*}
		\det (A^p - B^p) = \left(\frac{3}{8} \right)^p \left(3^p - \frac{2^p+4^p}{2} \right).
	\end{align*}
	If $A^p \geqs B^p$, we must have $\det (A^p - B^p) \geqs 0$, i.e.
	$
		\frac{2^p + 4^p}{2} \leqs 3^p,
	$
	which is false when $p>1$.	
	\end{proof}
\end{prop}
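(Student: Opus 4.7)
The plan is to exhibit a concrete counterexample in $M_2^+$: a pair $A \geqs B \geqs 0$ for which $A^p \not\geqs B^p$ whenever $p > 1$. Since a Hermitian $2 \times 2$ matrix fails to be positive semidefinite as soon as its determinant is negative, two-by-two examples are the most economical, and the search narrows to picking a pair that keeps $B^p$ tractable.

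For the choice of matrices, I would take $B$ to be a rank-one orthogonal projection, for instance $B = \tfrac12 \bigl(\begin{smallmatrix} 1 & 1 \\ 1 & 1 \end{smallmatrix}\bigr)$; then $B^p = B$ for every $p > 0$, which removes one source of complication entirely. For $A$ I would use a diagonal matrix so that $A^p$ is also diagonal and explicit, with entries chosen so that $A - B$ is positive semidefinite but only barely. The entries $3/2$ and $3/4$ work: a quick trace/determinant check on $A - B$ shows it has eigenvalues $5/4$ and $0$, so $A \geqs B \geqs 0$ is confirmed.

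The second step is to write $A^p - B^p$ entrywise and evaluate its determinant, which expands to a positive multiple of $3^p - (2^p + 4^p)/2$. The crux is then a one-variable convexity inequality: for $p > 1$ the function $x \mapsto x^p$ is strictly convex on $(0,\infty)$, so applying strict convexity at the midpoint of $\{2,4\}$ gives
\[
3^p = \left(\frac{2+4}{2}\right)^p < \frac{2^p+4^p}{2}.
\]
Hence $\det(A^p - B^p) < 0$ for every $p > 1$, which precludes $A^p \geqs B^p$. I do not foresee any substantial obstacle: the only creative ingredient is finding the right matrix pair, and the choice above reduces the whole question to the single scalar convexity step above.
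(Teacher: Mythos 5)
Your proposal is correct and is essentially the paper's own proof: the same matrices $A=\operatorname{diag}(3/2,3/4)$ and the rank-one projection $B$, the same determinant computation, and the same conclusion. The only difference is that you spell out the final scalar inequality $3^p<(2^p+4^p)/2$ via strict convexity of $x\mapsto x^p$, which the paper leaves implicit.
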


\begin{thm} \label{Theorem 2.4.1}
    If $f$ is a $2$-monotone function on $(a,b)$, then $f$ is $C^1$ on $(a,b)$ and $f'>0$ unless $f$ is a constant.
    In particular, every operator monotone function on $(a,b)$ is $C^1$.
\end{thm}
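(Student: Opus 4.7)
The strategy is to encode $2$-monotonicity as positive semidefiniteness of the $2 \times 2$ divided-difference matrix, and then to exploit this matrix inequality to extract $C^1$ regularity. First I would note that $2$-monotonicity implies $1$-monotonicity (restrict to diagonal $A,B \in M_2^{sa}$), so $f$ is classically monotone increasing. Consequently $f$ is continuous and differentiable off a countable subset of $(a,b)$, and possesses one-sided derivatives $0 \leqs f'_-(t) \leqs f'_+(t)$ everywhere.

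For any $s \neq t$ in $(a,b)$ where $f$ is differentiable at both, I would apply $2$-monotonicity to $A = \mathrm{diag}(s,t)$ and $B = A + \ep P$, with $P \geqs 0$ an arbitrary rank-one $2 \times 2$ matrix. The Daleckii--Krein first-order perturbation formula (a direct computation in the $2 \times 2$ case from the eigenpairs of $B$) gives
\begin{align*}
f(B) \;=\; f(A) + \ep \, L_f(s,t) \circ P + o(\ep), \qquad L_f(s,t) \;:=\; \begin{pmatrix} f'(s) & \frac{f(s)-f(t)}{s-t} \\ \frac{f(s)-f(t)}{s-t} & f'(t) \end{pmatrix},
\end{align*}
where $\circ$ denotes the Schur product. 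Since $B \geqs A$ forces $f(B) \geqs f(A)$, the first-order term is nonnegative for every such $P$, and choosing $P$ to be the all-ones matrix (which is the identity for $\circ$) yields $L_f(s,t) \geqs 0$.

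From $L_f(s,t) \geqs 0$ I would read off $f'(s), f'(t) \geqs 0$ together with the Cauchy--Schwarz bound $f'(s)f'(t) \geqs \bigl(\tfrac{f(s)-f(t)}{s-t}\bigr)^2$. If $f'(t_0) = 0$ at any differentiability point $t_0$, then $f(s) = f(t_0)$ at every differentiability point $s$, and density together with monotonicity force $f$ to be constant on $(a,b)$. For non-constant $f$, the same inequality gives the local upper bound $f'(t) \leqs f^{[1]}(s_0,t)^2 / f'(s_0)$ at any fixed differentiability point $s_0$; since $f$ is locally bounded, $f'$ is locally bounded on the differentiability set, so $f$ is locally Lipschitz and hence continuous on all of $(a,b)$.

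Finally, to reach $C^1$, fix $t \in (a,b)$ and apply $L_f(s_n, s_m) \geqs 0$ to sequences of differentiability points $s_n, s_m \to t$. Convergence of $f^{[1]}(s_n, s_m)$ along one-sided approaches to $f'_\pm(t)$, combined with the Cauchy--Schwarz bound, pinches $f'(s_n)$ to a single limit that equals both $f'_-(t)$ and $f'_+(t)$; this simultaneously gives differentiability at $t$ and continuity of $f'$. The main obstacle is precisely this last bootstrap: the PSD bound at an isolated pair of points is only a one-way inequality, so pinning $f'$ down to a single continuous function on the entire interval requires using $L_f \geqs 0$ at several pairs at once and interacting carefully with the countable exceptional set to rule out jumps of $f'_\pm$.
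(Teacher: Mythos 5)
The paper does not actually prove this theorem: it defers entirely to L\"{o}wner and Donoghue, remarking that the proof is very long. Your strategy --- extracting positivity of the $2\times 2$ L\"{o}wner matrix $L_f(s,t)$ from a first-order perturbation of $\mathrm{diag}(s,t)$, then using $f'(s)f'(t)\geqs f^{[1]}(s,t)^2$ --- is indeed the classical route, and the perturbation step itself is sound at pairs of distinct points where $f$ is differentiable. But the write-up has three gaps. First, a merely monotone function is differentiable only almost everywhere; it need \emph{not} possess one-sided derivatives at every point (consider $x\mapsto x(2+\sin\log|x|)$ near $0$, which is strictly increasing and Lipschitz yet has no right derivative at $0$). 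So the objects $f'_{\pm}(t)$ on which your final step rests are not known to exist; their existence is part of what must be proved. Second, the implication ``$f'$ is locally bounded on the differentiability set, so $f$ is locally Lipschitz'' is false for monotone functions: the Cantor function has $f'=0$ almost everywhere but is not constant. This one is repairable from what you already have: for $s,t$ in the differentiability set $D$ you get $|f^{[1]}(s,t)|\leqs\sqrt{f'(s)f'(t)}\leqs M$ locally, i.e.\ a Lipschitz bound on the dense set $D$, and monotonicity then rules out jumps and extends the bound to all pairs.

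The third gap is the decisive one: the passage from ``locally Lipschitz with $L_f\geqs 0$ on $D\times D$'' to ``differentiable everywhere with continuous derivative'' is exactly where the bulk of L\"{o}wner's and Donoghue's work lies, and your sketch does not close it. The inequality $f'(s)f'(t)\geqs f^{[1]}(s,t)^2$ is one-sided; taking $s_n,s_m\to t$ it bounds divided differences \emph{from above} by products of derivatives at nearby points of $D$, but it does not by itself force the upper and lower difference quotients at a point $t\notin D$ to coincide, nor does it prevent $f'$ from oscillating on $D$ near $t$. Your own closing sentence concedes that this ``bootstrap'' is the main obstacle; acknowledging an obstacle is not the same as overcoming it. To finish one needs an additional idea --- e.g.\ mollifying $f$ (convolution with a positive kernel preserves $2$-monotonicity on slightly smaller intervals, yielding smooth $2$-monotone approximants for which $L_f\geqs 0$ holds everywhere) and then establishing local uniform convergence or equicontinuity of the derivatives, or L\"{o}wner's original argument via the regularity of $t\mapsto f^{[1]}(s,t)$. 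As it stands, the proposal establishes local Lipschitz continuity and the dichotomy ``constant or $f'>0$ on $D$'', but not the $C^1$ assertion that is the substance of the theorem.
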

\begin{proof}
    The proof is very long and it consists of many details.
    The original proof is contained in \cite{Lowner}; see also \cite{Donoghue}.
\end{proof}

\begin{thm}
    Let $n \geqs 2$ be an integer. The following statements are equivalent for a function $f: (a,b) \to \R$:
    \begin{enumerate}
        \item   $f$ is $n$-monotone on $(a,b)$;
        \item   $f$ is $C^1$ on $(a,b)$ and $[f^{[1]}(\ld_i, \ld_j)]_{i,j=1}^n \geqs 0$ for every choice of
                $\ld_1 < \ld_2 < \dots < \ld_n$ from $(a,b)$.
    \end{enumerate}
    	Here, the {1st divided difference} $f^{[1]} (x, y)$ is defined to be $\frac{f(x) - f(y)}{x - y}$
    	for $x \neq y$ and $f^{[1]}(x,x)= f'(x)$.
\end{thm}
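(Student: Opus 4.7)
The plan is to reduce both implications to the Daletskii--Krein formula for the Fr\'echet derivative of $f$ at a Hermitian matrix, together with the Schur product theorem (the Hadamard product of two positive semidefinite matrices is positive semidefinite). The central identity I will use is this: if $A \in M_n^{sa}$ has spectral decomposition $A = U \Ld U^*$ with $\Ld = \mathrm{diag}(\ld_1,\dots,\ld_n)$ and $f$ is $C^1$ on a neighborhood of $\sigma(A)$, then the Fr\'echet derivative of $X \mapsto f(X)$ at $A$ in direction $H \in M_n^{sa}$ is
\begin{align*}
Df(A)(H) \;=\; U \bigl( [f^{[1]}(\ld_i,\ld_j)]_{i,j=1}^n \circ (U^* H U) \bigr) U^*,
\end{align*}
where $\circ$ denotes the Hadamard (entrywise) product.

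For the forward direction, assume $f$ is $n$-monotone. Theorem \ref{Theorem 2.4.1} already gives $f \in C^1$. Fix $\ld_1 < \cdots < \ld_n$ in $(a,b)$ and set $A = \mathrm{diag}(\ld_1,\dots,\ld_n)$. For any $H \geqs 0$ in $M_n$ and all sufficiently small $t > 0$, the spectrum of $A + tH$ lies in $(a,b)$, and $n$-monotonicity gives $f(A+tH) \geqs f(A)$. Dividing by $t$ and letting $t \downarrow 0$ yields $Df(A)(H) \geqs 0$ for every PSD $H$ (the cone $M_n^+$ is closed). With $U = I$ here, the Daletskii--Krein formula reduces this to
\begin{align*}
[f^{[1]}(\ld_i,\ld_j)]_{i,j=1}^n \circ H \;\geqs\; 0 \quad \text{for every } H \geqs 0.
\end{align*}
Taking $H$ to be the all-ones matrix (PSD of rank one) recovers the divided-difference matrix itself, so it is PSD.

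For the converse, assume condition (2), and let $A, B \in M_n^{sa}$ with $A \leqs B$ and spectra in $(a,b)$. Set $A(t) = (1-t)A + tB$ for $t \in [0,1]$. At each $t$ where $A(t)$ has simple spectrum, diagonalize $A(t) = U(t)\Ld(t)U(t)^*$ with eigenvalues $\mu_1(t),\dots,\mu_n(t)$ and apply the Daletskii--Krein formula in direction $B - A$:
\begin{align*}
\frac{d}{dt} f(A(t)) \;=\; U(t) \bigl( [f^{[1]}(\mu_i(t),\mu_j(t))]_{i,j=1}^n \circ (U(t)^*(B-A)U(t)) \bigr) U(t)^*.
\end{align*}
The hypothesis makes the first factor of the Hadamard product PSD, and the second factor $U(t)^*(B-A)U(t)$ is PSD because $B - A \geqs 0$. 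The Schur product theorem then forces $\tfrac{d}{dt}f(A(t)) \geqs 0$, and integrating from $0$ to $1$ gives $f(B) - f(A) \geqs 0$. The finitely many $t$ at which $A(t)$ has non-simple spectrum (a zero set of a polynomial discriminant in $t$) are negligible for the integration; alternatively one perturbs $B$ by $\epsilon I$ and passes to $\epsilon \downarrow 0$, using that $f^{[1]}$ extends continuously to the diagonal via $f^{[1]}(x,x) = f'(x)$.

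The main obstacle is establishing the Daletskii--Krein derivative formula rigorously for $C^1$ functions. The standard route is to verify it first for polynomials by expanding $(A+tH)^k$ and collecting the linear-in-$t$ terms $\sum_{j=0}^{k-1} A^j H A^{k-1-j}$ (the coefficient of $\tilde{H}_{ab}$ in the diagonalizing basis is exactly $\sum_j \ld_a^j \ld_b^{k-1-j}$, which equals $f^{[1]}(\ld_a,\ld_b)$ for $f(x)=x^k$), and then to extend to arbitrary $C^1$ functions by polynomial approximation on a compact interval containing the spectra involved, with uniform control of the derivative. The remaining ingredients, namely the Schur product theorem and the continuity of the divided-difference matrix in its arguments, are elementary.
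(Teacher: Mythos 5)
Your proof is correct in outline and is, in fact, the standard modern proof of L\"owner's divided-difference criterion (essentially the argument in Bhatia's \emph{Matrix Analysis} \cite{Bhatia}, Ch.~V). The paper itself offers no proof of this theorem, deferring entirely to L\"owner's original article, which proceeds quite differently (via interpolation and determinant arguments for the divided-difference matrices rather than via the Fr\'echet derivative). What your route buys is conceptual clarity: once the Daletskii--Krein formula $Df(A)(H) = U\bigl(L \circ (U^*HU)\bigr)U^*$ is available for $C^1$ functions, the forward implication is just the observation that the derivative of a monotone map in a PSD direction is PSD (specialized to $H$ the all-ones matrix), and the converse is exactly the Schur product theorem integrated along the segment from $A$ to $B$. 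You also correctly isolate the one genuinely nontrivial ingredient --- Fr\'echet differentiability of the functional calculus for merely $C^1$ functions --- and your sketch (polynomials first, where the coefficient of $H_{ab}$ in the eigenbasis is $\sum_j \ld_a^j\ld_b^{k-1-j} = f^{[1]}(\ld_a,\ld_b)$ for $f(x)=x^k$, then approximation with $p_m \to f$ and $p_m' \to f'$ uniformly on a compact subinterval) is the right way to close it; note that the forward direction already needs $f\in C^1$, which you correctly obtain from Theorem \ref{Theorem 2.4.1} since $n\geqs 2$.

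Two small points to tighten in the converse. First, hypothesis (2) asserts positivity of the L\"owner matrix only at $n$ \emph{distinct} points, whereas along the path $A(t)$ the eigenvalues $\mu_i(t)$ may coincide; you should say explicitly that the matrix $[f^{[1]}(\mu_i,\mu_j)]$ with repetitions is a limit of L\"owner matrices at distinct points (using continuity of $f^{[1]}$ on the closed square, available because $f\in C^1$), hence still PSD. Second, your fallback of replacing $B$ by $B+\ep I$ does not remove eigenvalue collisions of $A(t)$ --- it merely translates the spectrum --- so it does not by itself handle the case where the discriminant vanishes identically in $t$. The clean fix is that the same polynomial-approximation argument shows $X\mapsto f(X)$ is \emph{continuously} Fr\'echet differentiable on the set of Hermitian matrices with spectrum in a compact subinterval of $(a,b)$, so $t\mapsto f(A(t))$ is $C^1$ on all of $[0,1]$ and no exceptional parameter set needs to be excluded. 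With those two remarks the argument is complete.
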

\begin{proof}
    See \cite{Lowner}.
\end{proof}

\begin{thm} \label{thm 2.4.4 and cor 2.4.6}
    Let $n \geqs 2$ be an integer. The following statements are equivalent for a function $f: (a,b) \to \R$:
    \begin{enumerate}
        \item   $f$ is $n$-convex on $(a,b)$;
        \item   $f$ is $C^2$ on $(a,b)$ and $[f^{[2]}(\ld_1, \ld_i, \ld_j)]_{i,j=1}^n \geqs 0$ for every choice of
                $\ld_1, \ld_2, \dots, \ld_n$ from $(a,b)$.
    \end{enumerate}
    Moreover, if $f$ is operator convex, then $f^{[1]}(\ld, \cdot)$ is operator monotone for every $\ld \in (a,b)$.
    Here, the {2nd divided difference} $f^{[2]} (x,y,z)$ is defined to be
        $\frac{f^{[1]} (x, y) - f^{[1]} (y,z)}{ x - z}$
    	for $x \neq z$ and $f^{[2]}(x,y,x) = f''(x)$.
\end{thm}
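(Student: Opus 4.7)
The plan is to imitate the proof of the preceding theorem (the $n$-monotone characterization), replacing first derivatives and first divided differences by second-order analogues. The central analytical input is a Daletskii--Krein-type second-order Taylor expansion: for a Hermitian $A = \sum_i \ld_i P_i$ with $\sigma(A)\subseteq(a,b)$ and a Hermitian perturbation $X$,
\begin{align*}
f(A+tX) = f(A) + t\sum_{i,j} f^{[1]}(\ld_i, \ld_j)\, P_i X P_j + t^2 \sum_{i,j,k} f^{[2]}(\ld_i, \ld_j, \ld_k)\, P_i X P_j X P_k + O(t^3).
\end{align*}
I would verify this first for polynomials by a direct finite-difference computation (where second divided differences arise naturally) and then extend by density. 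A preliminary step is to upgrade $n$-convex to $C^2$; this follows by running an argument parallel to Theorem~\ref{Theorem 2.4.1} on the slope function $t\mapsto f^{[1]}(\ld,t)$, which should be $(n{-}1)$-monotone whenever $f$ is $n$-convex.

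For $(1)\Rightarrow(2)$, I would set $A = \mathrm{diag}(\ld_1,\dots,\ld_n)$ and apply midpoint $n$-convexity to $A\pm tX$; passing to the limit $t\to 0$, the second-order Taylor term is forced to satisfy
\begin{align*}
\sum_{i,j,k} f^{[2]}(\ld_i, \ld_j, \ld_k)\, P_i X P_j X P_k \;\geqs\; 0 \qquad \text{for every Hermitian } X.
\end{align*}
Choosing the rank-one perturbation $X = e_1 v^{*} + v e_1^{*}$ with $v = (v_1,\dots,v_n)^T \in \C^n$ and sandwiching with $e_1$ collapses the triple sum to $\sum_{i,j} f^{[2]}(\ld_1, \ld_i, \ld_j)\, v_i \overline{v_j}$, yielding precisely the matrix positivity in (2). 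For the converse, the matrix condition, together with the symmetry of divided differences under permutations, forces the whole trilinear form to be nonnegative, so $t\mapsto f(A+tX)$ has nonnegative second derivative, hence is convex in $t$; parametrizing the segment from $B$ to $A$ then delivers $n$-convexity.

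The additional claim that operator convexity of $f$ makes $f^{[1]}(\ld,\cdot)$ operator monotone for each $\ld$ is then a short corollary: by the $n$-monotone characterization in the previous theorem, $f^{[1]}(\ld,\cdot)$ is $n$-monotone iff the matrix $[(f^{[1]}(\ld,\cdot))^{[1]}(\mu_i,\mu_j)]_{i,j}$ is positive semidefinite, and a direct computation shows this double divided difference equals $f^{[2]}(\ld,\mu_i,\mu_j)$, which is positive semidefinite for every $n$ and every choice of $\mu_i$ by part~(2) applied to $f$.

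The main obstacle is the second-order Taylor expansion together with the explicit identification of its coefficients as second divided differences; once that formula is in hand, the rank-one trick extracting the asymmetric matrix in (2), the reverse direction, and the operator-monotone corollary follow with comparatively little effort. The preliminary $C^2$ upgrade is similarly delicate, mirroring the long proof of Theorem~\ref{Theorem 2.4.1} one derivative order higher.
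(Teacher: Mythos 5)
The paper itself offers no proof of this theorem --- it simply cites Kraus --- so there is nothing in the text to compare your argument against; I am judging the sketch on its own. Your architecture is the standard one: the second-order Daletskii--Krein expansion, positivity of its quadratic term, rank-one perturbations, and the identity $\bigl(f^{[1]}(\ld,\cdot)\bigr)^{[1]}(\mu,\nu)=f^{[2]}(\ld,\mu,\nu)$ for the final clause. Your converse direction and the ``moreover'' clause are sound: writing $z_{ij}=\overline{\eta_i}X_{ij}$ decomposes $\langle\eta,\sum_{i,j,k}f^{[2]}(\ld_i,\ld_j,\ld_k)P_iXP_jXP_k\,\eta\rangle$ into $\sum_j\sum_{i,k}f^{[2]}(\ld_i,\ld_j,\ld_k)z_{ij}\overline{z_{kj}}$, a sum of quadratic forms governed by the Kraus matrices anchored at each $\ld_j$, all of which (2) makes nonnegative.

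The key extraction step in $(1)\Rightarrow(2)$, however, fails as written. With $A=\mathrm{diag}(\ld_1,\dots,\ld_n)$ and $P_i=e_ie_i^*$ one has $P_iXP_jXP_k=X_{ij}X_{jk}\,e_ie_k^*$, so sandwiching the positive operator $M=\sum_{i,j,k}f^{[2]}(\ld_i,\ld_j,\ld_k)P_iXP_jXP_k$ with $e_1$ gives only $M_{11}=\sum_j f^{[2]}(\ld_1,\ld_j,\ld_1)\,|X_{1j}|^2$: the diagonal entries of the Kraus matrix, never the cross terms $f^{[2]}(\ld_1,\ld_i,\ld_j)$ with $i\ne j$, because forcing both outer indices to equal $1$ collapses the bilinear form. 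The correct move is to take $X=e_1v^*+ve_1^*$ with $v_1=0$ and test $M\geqs 0$ against a vector $\eta$ with $\eta_1=0$, which yields $\langle\eta,M\eta\rangle=\sum_{i,k\geqs 2}f^{[2]}(\ld_1,\ld_i,\ld_k)\,w_i\overline{w_k}$ with $w_i=\overline{\eta_i}v_i$ arbitrary. Even then this produces only an $(n-1)\times(n-1)$ Kraus matrix from $n$-convexity; recovering the full $n\times n$ matrix at the same degree $n$, as the theorem asserts, is exactly the delicate bookkeeping that occupies Kraus's paper, and it does not follow from a limiting perturbation of the anchor point (that costs one extra dimension). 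Finally, you assume two substantial ingredients without proof: the $C^2$ regularity of an $n$-convex function, and the $(n-1)$-monotonicity of $t\mapsto f^{[1]}(\ld,t)$ --- the latter is essentially the ``moreover'' clause at finite degree, so invoking it as a preliminary step risks circularity. The expansion itself, the converse, and the deduction of the operator-monotonicity of $f^{[1]}(\ld,\cdot)$ from the all-$n$ version of (2) are fine.
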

\begin{proof}
    See \cite{Kraus}.
\end{proof}

Operator monotone functions can be defined in the context of operators acting on a Hilbert space as
illustrated in the next theorem.
This is why we also call a matrix monotone function an operator monotone function.
Note that in this theorem we assume the continuity of $f$ since we need to define the continuous functional
calculus of an operator. Here, $B(\HH)$ denotes the algebra of bounded linear operators on a Hilbert space $\HH$.

\begin{thm}
The following statements are equivalent for a continuous function $f: (a,b) \to \R$:
\begin{enumerate}
    \item[(i)]   $A \leqs B \implies f(A) \leqs f(B)$ for all Hermitian matrices $A,B$ of all orders whose spectrums are contained in $(a,b)$;
    \item[(ii)]   $A \leqs B \implies f(A) \leqs f(B)$ for all Hermitian operators $A,B \in B(\HH)$ whose spectrums
            are contained in $(a,b)$ and for an infinite-dimensional Hilbert space $\HH$;
    \item[(iii)]   $A \leqs B \implies f(A) \leqs f(B)$ for all Hermitian operators $A,B \in B(\HH)$ whose spectrums are contained in $(a,b)$ and for all Hilbert spaces $\HH$.
\end{enumerate}
\end{thm}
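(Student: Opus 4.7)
The plan is to establish the chain (iii) $\Rightarrow$ (ii) $\Rightarrow$ (i) $\Rightarrow$ (iii). The first implication is immediate, the second reduces finite matrices to a direct sum inside the given infinite-dimensional space, and the third is the substantive step, approximating operators by matrix compressions in the strong operator topology.

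For (iii) $\Rightarrow$ (ii), simply restrict (iii) to the given infinite-dimensional Hilbert space. For (ii) $\Rightarrow$ (i), fix the infinite-dimensional Hilbert space $\HH$ of hypothesis (ii), an $n$-dimensional subspace $V \subseteq \HH$ identified unitarily with $\C^n$, and a constant $c \in (a,b)$. Given $A,B \in M_n^{sa}$ with spectra in $(a,b)$ and $A \leqs B$, define $\tilde A = A \oplus c I_{V^\perp}$ and $\tilde B = B \oplus c I_{V^\perp}$; these are bounded Hermitian operators on $\HH$ with spectra $\sigma(A) \cup \{c\}$ and $\sigma(B) \cup \{c\}$, both compact in $(a,b)$, and clearly $\tilde A \leqs \tilde B$. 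Hypothesis (ii) then gives $f(\tilde A) \leqs f(\tilde B)$, and since the functional calculus respects direct sums, compressing to $V$ recovers $f(A) \leqs f(B)$.

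For (i) $\Rightarrow$ (iii), let $A,B \in B(\HH)$ be Hermitian with spectra in $(a,b)$ and $A \leqs B$. Choose an increasing net $\{P_\alpha\}$ of finite-rank projections on $\HH$ converging strongly to $I$, fix $c \in (a,b)$, and set $A_\alpha = P_\alpha A P_\alpha + c(I-P_\alpha)$, $B_\alpha = P_\alpha B P_\alpha + c(I-P_\alpha)$. Each decomposes as an orthogonal direct sum of a finite Hermitian matrix on $P_\alpha \HH$ and $cI$ on $P_\alpha^\perp \HH$; the numerical-range bound forces the finite blocks to have spectra inside a fixed compact set $K \subset (a,b)$ depending only on $A$, $B$, and $c$. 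Moreover $B_\alpha - A_\alpha = P_\alpha(B-A)P_\alpha \geqs 0$, so hypothesis (i), applied to the matrix block and combined with the trivial block $cI$, yields $f(A_\alpha) \leqs f(B_\alpha)$ for every $\alpha$.

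It remains to pass to the limit. A direct computation using $P_\alpha \to I$ strongly together with boundedness of $A$ shows $A_\alpha \to A$ and similarly $B_\alpha \to B$ strongly. The key point, which is the main obstacle of the proof, is the strong operator continuity of the continuous functional calculus: if self-adjoint operators $T_\alpha$ converge strongly to $T$ and all their spectra lie in a common compact $K$, then $g(T_\alpha) \to g(T)$ strongly for any continuous $g$ on $K$. This is immediate for polynomials from the strong operator continuity of multiplication on norm-bounded sets, and extends to continuous $g$ by Stone--Weierstrass approximation together with a uniform $\varepsilon/3$ argument. Applying this to $f$ and using the fact that the positive cone of $B(\HH)$ is strongly closed, the inequality $f(A_\alpha) \leqs f(B_\alpha)$ passes to the limit to yield $f(A) \leqs f(B)$, completing the chain.
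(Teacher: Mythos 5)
Your proof is correct and follows essentially the same route as the paper: extension by $cI$ on the orthogonal complement for (ii) $\Rightarrow$ (i), and compressions $P_\alpha A P_\alpha + c(I-P_\alpha)$ converging strongly for (i) $\Rightarrow$ (iii). You additionally justify the strong-operator continuity of the continuous functional calculus on norm-bounded self-adjoint operators with spectra in a common compact set (via polynomials and an $\varepsilon/3$ argument), a step the paper asserts without proof, so nothing is missing.
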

\begin{proof}
It is obvious that (iii) implies (ii). The implication (ii) $\Rightarrow$ (i) follows by taking
an $n$-dimensional subspace.

(i) $\Rightarrow$ (iii).
For each finite-dimensional subspace $F$ of $\HH$, let $P_F$ be the orthogonal projection onto $F$.
Suppose that $A \leqs B$ in $B(\HH)$ with spectra in $(a,b)$.
Consider nets $A_F:=P_F A P_F + c (I - P_F)$ and $B_F:=P_F B P_F + c(I - P_F)$ in
$B(\HH)$, where $c \in (a,b)$ is fixed and a directed set
\begin{equation*}
    \{F :  F \text{ is a finite-dimensional subspace of } \HH\}
\end{equation*}
with respect to the set inclusion.
Since $A_F \to A$ and $B_F \to B$ in the strong operator topology, we have $f(A_F) \to f(A)$ and
$f(B_F) \to f(B)$ in the strong operator topology. Note that $f(A_F) = f(P_F A P_F) + f(c) (I - P_F)$, where $f(P_F A
P_F)$ is the functional calculus of $P_F A P_F$ in $B(F)$. Since $B(F)$ is identified with $M_n$ with $n = \dim F$ and
since $P_F A P_F \le P_F B P_F$ as elements of $B(F)$, (iii) implies that $f(P_F A P_F) \le f(P_F B P_F)$ and hence
$f(A_F) \le f(B_F)$. By taking the limit in the strong operator topology, we have $f(A) \le f(B)$.
\end{proof}

\section{Hansen-Pedersen characterizations}

In this section, we characterize operator monotone functions in the sense of Hansen-Pedersen \cite{Hansen-Pedersen}.

\begin{lem} \label{Lemma 2.5.1}
	(1) Assume that $A \in M_n$ is normal and $U \in M_n$ is unitary.
	Then for every function $f$ on $\sigma (A)$,
			$f(U^* A U) = U^* f(A) U$.
			
	(2) For every $X \in M_n$ and every function $f$ on $\sigma(X^*X)$, we have
	$Xf(X^*X) = f(XX^*)X$.
\end{lem}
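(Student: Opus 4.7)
The plan is to reduce both parts to the definition of functional calculus via the spectral resolution. In each case I would first handle the statement for polynomials and then pass to general $f$ by using that the functional calculus of a normal operator depends only on the values of $f$ on its spectrum.

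For part (1) I would start from the spectral resolution $A = \sum_{i=1}^m \lambda_i P_i$ and verify that $U^*AU = \sum_{i=1}^m \lambda_i (U^*P_i U)$ is itself the spectral resolution of $U^*AU$: each $U^*P_iU$ is a self-adjoint idempotent, the product $(U^*P_iU)(U^*P_jU) = U^*P_iP_jU$ vanishes for $i \neq j$, and $\sum_i U^*P_iU = U^*IU = I$, while the scalars $\lambda_i$ remain distinct. Uniqueness of the spectral decomposition then gives $\sigma(U^*AU) = \sigma(A)$ (so $f$ is defined on the spectrum of $U^*AU$), and reading off the functional calculus from both sides yields $f(U^*AU) = \sum_i f(\lambda_i)(U^*P_iU) = U^* f(A) U$.

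For part (2) the key observation is the elementary algebraic identity $X(X^*X) = (XX^*)X$, which iterates to $X(X^*X)^n = (XX^*)^n X$ and hence by linearity to $Xp(X^*X) = p(XX^*)X$ for every polynomial $p$. To upgrade from polynomials to arbitrary $f$ I would invoke $\sigma(X^*X) = \sigma(XX^*)$; this is standard but I would justify it quickly by noting that a nonzero eigenvalue with $X^*Xv = \lambda v$ gives $XX^*(Xv) = \lambda(Xv)$ with $Xv \neq 0$, while the multiplicities of $0$ match because $\dim \ker X = \dim \ker X^*$ in the square case. Choosing a polynomial $p$ that agrees with $f$ on this common spectrum via Lagrange interpolation, the functional calculus gives $f(X^*X) = p(X^*X)$ and $f(XX^*) = p(XX^*)$, so the polynomial identity transfers to $f$.

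The main subtlety lies in part (2): since $X$ need not be normal, the spectral projections of $X^*X$ and $XX^*$ are genuinely different operators, so one cannot simply conjugate a spectral resolution as in (1). The trick is that once the polynomial identity is in hand, only the list of eigenvalues matters, and the equality $\sigma(X^*X) = \sigma(XX^*)$ is precisely what allows a single interpolating polynomial to represent $f$ on both sides simultaneously.
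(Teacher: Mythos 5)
Your proof is correct and follows essentially the same route as the paper: conjugating the spectral resolution for part (1), and for part (2) establishing the identity for polynomials via $X(X^*X)^k=(XX^*)^kX$ and then passing to general $f$ through a Lagrange interpolation polynomial on the common spectrum $\sigma(X^*X)=\sigma(XX^*)$. The only differences are that you supply slightly more detail (verifying the conjugated projections form a spectral resolution, and justifying $\sigma(X^*X)=\sigma(XX^*)$, which the paper asserts without proof), neither of which changes the argument.
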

\begin{proof}
	(1)	Take the spectral resolution $A = \sum_{i = 1}^m \ap_i P_i$.
		Then $U^* A U = \sum_{i = 1}^m \ap_i U^* P_i U$
		is the spectral resolution of $U^*AU$.
		Hence
				\begin{align*}
					f(U^*AU) = \sum_{i = 1}^m f(\ap_i) U^* P_i U = U^* f(A) U.
				\end{align*}

	(2) Since $\sigma(X^*X) = \sigma(XX^*)$, $f(XX^*)$ is well-defined.
		Since $X(X^*X)^k = (XX^*)^k X$ for all $k \in \N$, the assertion holds
		when $f$ is a polynomial.
		Let $f$ be an arbitrary function on
		$\sigma(X^*X) = \{ \ap_1,\dots,\ap_m \}$.
		Define the {Lagrange interpolation polynomial}
		\begin{align*}
			p(t) = \sum_{j=1}^m f(\ap_j) \prod_{1 \leqs i \leqs m, i \neq j}
			\frac{t - \ap_j}{\ap_i - \ap_j},
		\end{align*}
		which is a polynomial such that $p(\ap_i) = f(\ap_i)$ for
		$1 \leqs i \leqs m$.
		It follows from (1) that $Xf(X^*X) = Xp(X^*X) = p(XX^*)X = f(XX^*)X$.
\end{proof}

\begin{thm} \label{thm 2.5.2}
	Let $f: [0,\ap) \to \R$ be a function where $0 < \ap \leqs \infty$. Then the following are equivalent:
	\begin{enumerate}
			\item[(i)]	$f$ is operator convex on $[0,\ap)$ and $f(0) \leqs 0$;
			\item[(ii)]	$f$ is operator convex on $(0,\ap)$ and $f(0^+) \leqs f(0)
						\leqs 0$, where the existence of $f(0^+):=\lim_{t \to 0^+} f(t)$ and $f(0^+) \leqs f(0)$
                        are automatic from the operator convexity of $f$ on $(0,\ap)$;
			\item[(iii)]	$f(t)/t$ is operator monotone on $(0,\ap)$ and
						$f(0^+) \leqs f(0) \leqs 0$, where the existence of $f(0^+)$ and $f(0^+) \leqs f(0)$
                        are automatic from the operator monotonicity of $f(t)/t$ on $(0,\ap)$;
			\item[(iv)]	$f(X^*AX) \leqs X^* f(A) X$ for every $A \in M_n^{sa}$
						with $\sigma (A) \subset [0,\ap)$, for every $X \in M_n$
						with $\norm{X} \leqs 1$ and for every $n \in \N$;
			\item[(v)]	$f(X^*AX + Y^*BY) \leqs X^*f(A)X + Y^* f(B)Y$ for every
						$A,B \in M_n^{sa}$ with
						$\sigma(A), \sigma(B) \subseteq [0,\ap)$,
						for every $X,Y \in M_n$ with $X^*X+Y^*Y \leqs I$ and
						for every $n \in \N$;
			\item[(vi)]	$f(PAP) \leqs P f(A) P$ for every $A \in M_n^{sa}$
						with $\sigma(A) \subseteq [0,\ap)$, for every orthogonal
						projection $P$ on $\C^n$ and for every $n \in \N$.
	\end{enumerate}
\end{thm}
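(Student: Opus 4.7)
The plan is to establish the cyclic chain $(i) \Rightarrow (vi) \Rightarrow (iv) \Rightarrow (v) \Rightarrow (i)$ by $2 \times 2$ block-matrix tricks, and then to fold in $(ii)$ and $(iii)$ via a continuity argument at $0$ together with the ``contraction identity'' $X^{*}BX = A$ obtained from $X = B^{-1/2} A^{1/2}$.

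For $(i) \Rightarrow (vi)$, I would take the self-adjoint unitary $V := 2P - I$ and exploit the identity $\tfrac{1}{2}(A + VAV) = PAP + (I-P)A(I-P)$, which is block-diagonal with respect to $\C^n = P\C^n \oplus (I-P)\C^n$. Operator midpoint convexity combined with $f(VAV) = V f(A) V$ (Lemma~\ref{Lemma 2.5.1}(1)) yields
\[
f\bigl(PAP + (I-P)A(I-P)\bigr) \;\leqs\; P f(A) P + (I-P) f(A) (I-P),
\]
and compressing to $P\C^n$ gives $f(PAP) \leqs P f(A) P$; the hypothesis $f(0) \leqs 0$ is what discards the redundant $(I-P)$-block. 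For $(vi) \Rightarrow (iv)$, I would dilate the contraction $X$ to the unitary
\[
\tilde U := \begin{pmatrix} X & (I - XX^{*})^{1/2} \\ (I - X^{*}X)^{1/2} & -X^{*} \end{pmatrix} \in M_{2n}
\]
(whose unitarity rests on the intertwining $X(I - X^*X)^{1/2} = (I - XX^*)^{1/2} X$ from Lemma~\ref{Lemma 2.5.1}(2)) and apply $(vi)$ to $\tilde A := A \oplus 0$ with projection $\tilde P := I \oplus 0$: since $\tilde P(\tilde U^{*} \tilde A \tilde U)\tilde P = (X^{*}AX) \oplus 0$, reading off the $(1,1)$ block, together with $f(0) \leqs 0$, delivers $(iv)$. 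For $(iv) \Rightarrow (v)$, I would bundle $\tilde A := A \oplus B$ and $\hat X := \bigl(\begin{smallmatrix} X & 0 \\ Y & 0 \end{smallmatrix}\bigr)$, whose contraction condition is precisely $X^{*}X + Y^{*}Y \leqs I$; since $\hat X^{*} \tilde A \hat X = (X^{*}AX + Y^{*}BY) \oplus 0$, the $(1,1)$ block of $(iv)$ applied to this pair yields $(v)$. Finally $(v) \Rightarrow (i)$ is immediate: take $X = \sqrt{t}\, I$, $Y = \sqrt{1-t}\, I$ for operator convexity, and $X = Y = 0$, $A = B = 0$ for $f(0) \leqs 0$.

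For $(i) \Leftrightarrow (ii)$, the existence of $f(0^{+})$ and the inequality $f(0^{+}) \leqs f(0)$ are classical consequences of convexity on $[0,\ap)$; the direction $(ii) \Rightarrow (i)$ then follows by perturbing $A \mapsto A + \ep I$ to push the spectrum into $(0,\ap)$, applying operator convexity there, and letting $\ep \to 0^{+}$ while using the continuity of the functional calculus together with $f(0^{+}) \leqs f(0)$. For $(ii) \Leftrightarrow (iii)$, the forward direction is elegant once $(iv)$ is available: for $0 < A \leqs B$ with spectra in $(0,\ap)$, the operator $X := B^{-1/2} A^{1/2}$ is a contraction with $X^{*}BX = A$, so $(iv)$ produces $f(A) \leqs A^{1/2} B^{-1/2} f(B) B^{-1/2} A^{1/2}$, and conjugating by $A^{-1/2}$ (which commutes with $f(A)$) rewrites this as $A^{-1} f(A) \leqs B^{-1} f(B)$, i.e.\ $f(t)/t$ is operator monotone. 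The converse $(iii) \Rightarrow (ii)$, which must recover operator convexity of $f$ from the operator monotonicity of $f(t)/t$ alone, is the step I expect to be most delicate; it essentially requires ``inverting'' the above contraction trick, via a careful algebraic manipulation or by passing through an integral representation of operator monotone functions, and this is the main obstacle of the proof.
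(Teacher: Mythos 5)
Your cyclic chain (i) $\Rightarrow$ (vi) $\Rightarrow$ (iv) $\Rightarrow$ (v) $\Rightarrow$ (i) is correct and is a genuine variant of the paper's route (i) $\Rightarrow$ (iv) $\Rightarrow$ (v) $\Rightarrow$ (vi) $\Rightarrow$ (i): your passage (i) $\Rightarrow$ (vi) via the symmetry $V=2P-I$ and midpoint convexity is cleaner than the paper's two-unitary average, and your (vi) $\Rightarrow$ (iv) by unitary dilation of the contraction recovers what the paper gets directly from (i). Your (iv) $\Rightarrow$ (iii) via $X=B^{-1/2}A^{1/2}$ is exactly the paper's argument. However, the proposal has one genuine hole and one soft spot. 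The hole is (iii) $\Rightarrow$ (ii), which you explicitly leave open; without it the six conditions are not equivalent, since (iii) is otherwise only shown to be a consequence of the others. The missing idea is elementary and does not require integral representations: one first shows that if $g$ is continuous and operator monotone on $[0,\ap)$ then $h(t):=tg(t)$ satisfies (vi). For $A>0$ one has $A^{1/2}PA^{1/2}\leqs A$, hence $g(A^{1/2}PA^{1/2})\leqs g(A)$, and the intertwining identity of Lemma \ref{Lemma 2.5.1}(2) (applied to $X=A^{1/2}P$) gives $g(A^{1/2}PA^{1/2})A^{1/2}P=A^{1/2}Pg(PAP)$, whence
\begin{align*}
h(PAP)=PA^{1/2}\,g(A^{1/2}PA^{1/2})\,A^{1/2}P\leqs PA^{1/2}g(A)A^{1/2}P=Ph(A)P.
\end{align*}
Applying this to $g(t)=f(t+\ep)/(t+\ep)$ on $[0,\ap-\ep)$ and letting $\ep\searrow 0$ yields operator convexity of $f$ on $(0,\ap)$. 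This is precisely the ``inverse of the contraction trick'' you were looking for.

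The soft spot is (ii) $\Rightarrow$ (i). Your plan to replace $A$ by $A+\ep I$ and invoke ``continuity of the functional calculus'' ignores that $f$ may be genuinely discontinuous at $0$ (only $f(0^+)\leqs f(0)$ is assumed). As $\ep\searrow 0$, $f(C+\ep I)$ converges to $f_0(C)$, where $f_0$ is the continuous modification with $f_0(0)=f(0^+)$; so the limit only proves that $f_0$ is operator convex on $[0,\ap)$, and since $f(C)\geqs f_0(C)$ on the left-hand side the desired inequality for $f$ does not follow from $f(0^+)\leqs f(0)$ alone. One must additionally control the jump term $\gamma Q_0$ with $\gamma=f(0)-f(0^+)\geqs 0$ and $Q_0$ the projection onto the kernel; for the convexity inequality this is saved by $\ker(tA+(1-t)B)=\ker A\cap\ker B$ for $A,B\geqs 0$ and $0<t<1$, which makes the kernel projection of the convex combination dominated by the convex combination of the kernel projections. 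The paper handles the same difficulty (in the form (vi)) by the comparison $P\tilde{Q}_0P\leqs PQ_0P$. Your argument is fixable, but as written the limiting step does not close.
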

\begin{proof}
	(i) $\Rightarrow$ (iv).
			For $A,X$ as in (iv), define $\tilde{A}, U,V \in M_{2n}(\C)$ by
			\begin{align*}
				\tilde{A} = \begin{bmatrix} A & 0 \\ 0 & 0 \end{bmatrix},
				U = \begin{bmatrix} X & R \\ (I-X^*X)^{1/2}  & -X^* \end{bmatrix}, 
				V = \begin{bmatrix} X & -R \\ -(I-X^*X)^{1/2}  & X^* \end{bmatrix}
			\end{align*}
			where $R:= (I-XX^*)^{1/2}$.
			Lemma \ref{Lemma 2.5.1}(2) implies $X(I-X^*X)^{1/2} = (I-XX^*)^{1/2}X$.
			Direct computations show that $U$ and $V$ are unitary.			
			Hence (i) and Lemma \ref{Lemma 2.5.1}(1) imply that
			\begin{align*}
				\begin{bmatrix}	f(X^*AX) & 0 \\ 0 & f(RAR) \end{bmatrix}
					&=	f(\begin{bmatrix}	X^*AX & 0 \\ 0 & RAR \end{bmatrix}) 
					=	f(\frac{U^* \tilde{A} U + V^* \tilde{A} V}{2})  \\
					&\leqs \frac{f(U^* \tilde{A} U) + f(V^* \tilde{A} V)}{2}
					= \frac{U^* f(\tilde{A}) U + V^* f(\tilde{A}) V}{2}\\
					&=	\frac{1}{2} U^* \begin{bmatrix}	f(A) & 0 \\ 0 & f(0)I \end{bmatrix} U
							+ \frac{1}{2} V^* \begin{bmatrix}	f(A) & 0 \\ 0 & f(0)I \end{bmatrix} V \\
					&\leqs \frac{1}{2} U^* \begin{bmatrix}	f(A) & 0 \\ 0 & 0 \end{bmatrix} U
							+ \frac{1}{2} V^* \begin{bmatrix}	f(A) & 0 \\ 0 & 0 \end{bmatrix} V \\
					&= \begin{bmatrix}	X^*f(A)X & 0 \\ 0 & Rf(A)R \end{bmatrix}.
			\end{align*}
			Thus $f(X^*AX) \leqs X^*f(A)X$.

	(iv) $\Rightarrow$ (v).
	For $A,B,X,Y$ as in (v) define
	\begin{align*}
				\tilde{A} = \begin{bmatrix} A & 0 \\ 0 & B \end{bmatrix},
				\tilde{X} = \begin{bmatrix} X & 0 \\ Y & 0 \end{bmatrix}.
			\end{align*}
	Since $\tilde{X}^*X = \begin{bmatrix} X^*X + Y^*Y & 0 \\ 0 & 0 \end{bmatrix}
	\leqs \begin{bmatrix} I & 0 \\ 0 & I \end{bmatrix}$,
	we have $\norm{\tilde{X}} \leqs 1$.
	Also $\tilde{A}^*=A$ and $\sigma(\tilde{A}) = \sigma(A) \cup \sigma(B)
	\subseteq [0,\ap)$.
	Since $\tilde{X}^*AX = \begin{bmatrix} X^*AX + Y^*AY & 0 \\ 0 & 0 \end{bmatrix}$,
	we get
	\begin{align*}
		\begin{bmatrix} f(X^*AX + Y^*BY) & 0 \\ 0 & f(0)I \end{bmatrix}
			&=	f(\tilde{X}^*\tilde{A} \tilde{X}) \leqs \tilde{X}^*f(\tilde{A}) \tilde{X}  \\
			&= \begin{bmatrix} X^*f(A)X + Y^*f(B)Y & 0 \\ 0 & 0 \end{bmatrix}
	\end{align*}
	and $f(X^*AX + Y^*BY) \leqs X^*f(A)X + Y^*f(B)Y$.
	
	(v) $\Rightarrow$ (vi). Put $X=P$ and $Y=0$ in (v).
	
	(vi) $\Rightarrow$ (i).
	For $A,B \in M_n^{sa}$ with $\sigma(A), \sigma(B) \subseteq [0,\ap)$
	and $0<\ld<1$, define
	\begin{align*}
			\tilde{A} = \begin{bmatrix} A & 0 \\ 0 & B \end{bmatrix},
			U = \begin{bmatrix} \ld^{1/2}I & -(1-\ld)^{1/2}I \\ (1-\ld)^{1/2}I & \ld^{1/2}I \end{bmatrix},
			P = \begin{bmatrix} I & 0 \\ 0 & 0 \end{bmatrix}.
	\end{align*}
	Then $\tilde{A}^* = \tilde{A}$ with $\sigma(\tilde{A}) \subseteq [0,\ap)$, $U$ is unitary and $P$ is a projection.
	Now,
	\begin{align*}
		PU^*\tilde{A}UP = \begin{bmatrix} \ld A + (1-\ld)B & 0 \\ 0 & 0 \end{bmatrix}.
	\end{align*}
	Hence (vi) implies that
	\begin{align*}
		\begin{bmatrix} f(\ld A + (1-\ld)B) & 0 \\ 0 & f(0)I \end{bmatrix}
			&=	f(PU^*\tilde{A}UP) \\
			&\leqs	P f(U^* \tilde{A} U) P = P U^* f(\tilde{A}) UP \\
			&=	 \begin{bmatrix} \ld f(A) + (1-\ld)f(B) & 0 \\ 0 & 0 \end{bmatrix}.
	\end{align*}
	Thus $f(\ld A + (1-\ld)B) \leqs \ld f(A) + (1-\ld)f(B)$	and $f(0) \leqs 0$.

	(i) $\Rightarrow$ (ii). The function $f$ is operator convex on the restriction $(0,\ap)$.
        So, it is convex on $(0,\ap)$ which implies that $f(0^+)$ exists and $f(0^+) \leqs f(0)$.

	(ii) $\Rightarrow$ (i). Define a function $f_0$ on $[0,\ap)$ by $f_0 (0) =f(0^+)$
    and $f_0(t) = f(t)$ for $t>0$.
	Then $f_0$ is continuous since it is convex on the open set $(0,\ap)$.
	The continuity argument shows that $f_0$ is operator convex on $[0,\ap)$.
	Now consider $A$ and $P$ as in (vi).
    Let $Q_0$ be the orthogonal projection onto the kernel of $A$
    and $\tilde{Q}_0$ be that on the kernel of $PAP$.
    Then $Q_1 := I - Q$ is the orthogonal projection onto the range of
    $A$ and $\tilde{Q_1} := I - \tilde{Q}_0$ is the orthogonal projection onto the range of $PAP$.
    It follows that
    \begin{align*}
        f(PAP) &= f_0(PAP) + \ap \tilde{Q}_0, \\
        Pf(A)P &= P(f_0 (A) + \ap Q_0)P = P f_0(A) P + \ap P Q_0 P,
    \end{align*}
	here $\ap:= f(0) - f(0^+) \geqs 0$.
    By applying the implication (i) $\Rightarrow$ (ii) to $f_0$, we have $f_0(PAP) \leqs Pf_0(A)P$.
    Using the orthogonal decomposition $\C^n = P\C^n \oplus (I-P)\C^n$, we have
    \begin{align*}
        f(PAP) &= P f(PAP) P + f(0)(I-P)  \leqs Pf(PAP)P \\
         &= P f_0(PAP)P + \ap P \tilde{Q}_0 P \leqs P f_0 (A) P + \ap P \tilde{Q}_0 P.
    \end{align*}
    We will show that $P \tilde{Q}_0 P \leqs P Q_0 P$ which implies $f(PAP) \leqs P f(A) P$
    and hence (i) holds.
    Choose $\delta >0$ such that $A \geqs \delta Q_1$ and $\tilde{Q}_1 \geqs \delta PAP$.
    Then $\tilde{Q}_1 \geqs \delta^2 P Q_1 P$ and $(I-\tilde{Q}_1) PQ_1 P(I- \tilde{Q}_1) = 0$.
    Hence $Q_1 P(I - \tilde{Q}_1) = 0$ and $P Q_1 P = \tilde{Q}_1 P Q_1 P \tilde{Q}_1 \leqs \tilde{Q}_1$.
    Thus $P Q_1 P \leqs P \tilde{Q}_1 P$ or $P Q_0 P \geqs P \tilde{Q}_0 P$.

	(iv) $\Rightarrow$ (iii).
	Let $A \geqs B >0$. Setting $X = A^{-1/2} B^{1/2}$,
	we have $XX^* = A^{-1/2} B A^{-1/2} \leqs I$, i.e. $\norm{X} \leqs 1$.
	Since $B = X^*AX$, (iv) implies that
	\begin{align*}
		f(B) \leqs X^* f(A) X = B^{1/2} A^{-1/2} f(A) A^{-1/2} B^{1/2}
	\end{align*}
	and $A^{-1} f(A) = A^{-1/2}f(A) A^{-1/2} \geqs B^{-1/2} f(B) B^{-1/2} = B^{-1} f(B)$.
	Thus, $f(t)/t$ is operator monotone on $(0,\ap)$.

    (iii) $\Rightarrow$ (ii). First we prove that if $g$ is a continuous operator monotone function on
	$[0,\ap)$, then $h(t):=tg(t)$ is operator convex on $[0, \ap)$.
	To prove (vi) for $h$, we may assume that $A>0$.
	Since $A^{1/2} P A^{1/2} \leqs A$, we have $g(A^{1/2} P A^{1/2}) \leqs g(A)$.
	Then
	\begin{align*}
		PA^{1/2} g(A^{1/2} P A^{1/2}) A^{1/2} P \leqs PA^{1/2} g(A) A^{1/2} P.
	\end{align*}
	By Lemma \ref{Lemma 2.5.1}(2), we have $g(A^{1/2} P A^{1/2}) A^{1/2} P = A^{1/2} P g(PAP)$. Hence,
	\begin{align*}
		h(PAP) &= PAP g(PAP) = P A^{1/2} A^{1/2} P g(PAP)
			= PA^{1/2} g (A^{1/2} P A^{1/2}) A^{1/2} P  \\
		 &\leqs PA^{1/2} g(A) A^{1/2} P = PAg(A)P = P h(A) P,
	\end{align*}
	i.e. $h$ is operator convex on $[0,\ap)$ and the claim follows.

	Now, assume that $f(t)/t$ is operator monotone on $(0,\ap)$.
	By Theorem \ref{Theorem 2.4.1}, $f(t)/t$ is continuous on $(0,\ap)$.
	For each $\ep>0$, $f(t+ \ep) / (t + \ep)$ is continuous and operator monotone on $[0, \ap-\ep)$.
	The previous claim implies that $\frac{t}{t+\ep} f(t + \ep)$ is operator convex on $[0,\ap-\ep)$.
	Hence, by letting $\ep \searrow 0$, $f$ is operator convex on $(0,\ap)$.
\end{proof}

\begin{thm} \label{thm 2.5.3}
	If $\ap = \infty$ and $f(t) \leqs 0$ for all $t \in [0,\infty)$, then the conditions of Theorem \ref{thm 2.5.2}
	is also equivalent to
	
	(vii)	$-f$ is operator monotone on $[0,\infty)$.
\end{thm}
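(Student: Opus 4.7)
My plan is to show that (vii) joins the equivalence cycle of Theorem~\ref{thm 2.5.2} by establishing (i) $\Rightarrow$ (vii) directly and (vii) $\Rightarrow$ (iii), after which the implication (iii) $\Rightarrow$ (i) from Theorem~\ref{thm 2.5.2} closes the loop.

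For (i) $\Rightarrow$ (vii), I would set $g := -f$, so that $g$ is operator concave on $[0,\infty)$ with $g \geqs 0$. Given $0 \leqs B \leqs A$ in $M_n^+$ and any $t \in [0,1)$, define $D_t := (A - tB)/(1-t)$. Writing $A - tB = (A - B) + (1-t)B$ as a sum of two positive matrices shows $D_t \geqs 0$, so $A = tB + (1-t)D_t$ is a genuine convex combination of elements of $M_n^+$. Operator concavity of $g$ then gives
\[
g(A) \;\geqs\; t\, g(B) + (1-t)\, g(D_t) \;\geqs\; t\, g(B),
\]
the second inequality using $g(D_t) \geqs 0$. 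Since this holds for every $t \in [0,1)$, letting $t \nearrow 1$ yields $g(A) \geqs g(B)$, i.e., $-f$ is operator monotone on $[0,\infty)$.

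For (vii) $\Rightarrow$ (iii), assume $-f$ is operator monotone on $[0,\infty)$ with $f \leqs 0$. The auxiliary inequalities $f(0^+) \leqs f(0) \leqs 0$ are immediate: $f(0) \leqs 0$ from $f \leqs 0$, and $f(0^+) \leqs f(0)$ from the scalar monotonicity of $-f$. The heart of the matter is to prove that $f(t)/t$ is operator monotone on $(0,\infty)$; with $g := -f \geqs 0$, this is equivalent to the assertion that $g(t)/t$ is operator monotone \emph{decreasing} on $(0,\infty)$. I plan to obtain this from the integral representation of operator monotone functions on $[0,\infty)$ developed in Section~4: every such $g \geqs 0$ admits a representation
\[
g(t) = c_0 + c_1 t + \int_0^\infty \frac{\lambda t}{\lambda + t} \, d\mu(\lambda),
\]
with $c_0, c_1 \geqs 0$ and a positive Borel measure $\mu$ on $(0,\infty)$. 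Dividing by $t$ yields
\[
\frac{g(t)}{t} = \frac{c_0}{t} + c_1 + \int_0^\infty \frac{\lambda}{\lambda + t} \, d\mu(\lambda),
\]
and each summand is operator monotone decreasing on $(0,\infty)$ by Proposition~\ref{Lemma 2.5.5} and the remark immediately following it (applied to $1/t$ and to $1/(\lambda + t)$).

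The principal obstacle is precisely the direction (vii) $\Rightarrow$ (iii): the operator monotonicity of $g(t)/t$ is not apparent from the purely matrix-algebraic manipulations that prove Theorem~\ref{thm 2.5.2}, and one must invoke the integral representation machinery from Section~4. A self-contained alternative would be to establish Hansen's inequality $g(X^*AX) \geqs X^*g(A)X$ (for $\|X\| \leqs 1$) directly for operator monotone $g \geqs 0$, which is equivalent to the operator concavity of $g$; but either route necessarily exits the framework used so far in this section.
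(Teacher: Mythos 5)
Your implication (i) $\Rightarrow$ (vii) is essentially the paper's own argument: the paper writes $\ld A = \ld B + (1-\ld)\cdot\ld(1-\ld)^{-1}(A-B)$ and uses $f\leqs 0$ to discard the second term, then lets $\ld \nearrow 1$; your decomposition $A = tB + (1-t)D_t$ is the same idea and is in fact slightly cleaner, since $g(A)\geqs t\,g(B)$ for all $t<1$ gives the conclusion without any continuity argument at the limit.

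For the converse you take a genuinely different route. The paper never goes through (iii): it first reduces to continuous $f$ by proving (vii) $\Leftrightarrow$ ``$-f$ operator monotone on $(0,\infty)$ and $f(0^+)\leqs f(0)$'' (splitting off the jump $f(0)-f(0^+)$ on the kernel projection), and then proves (vii) $\Rightarrow$ (iv) directly by a $2\times 2$ block-matrix computation: with the unitary dilation $U$ of $X$ one has $U^*\tilde A U \leqs \tilde B$ for $\tilde B = \mathrm{diag}(X^*AX+\ep I,\ \bt I)$ and $\bt$ large, so monotonicity of $-f$ plus $f\leqs 0$ gives $f(X^*AX+\ep I)\leqs X^*f(A)X$. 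This stays entirely within the elementary toolkit of Section 3. Your route through the integral representation of Section 4 does close the loop logically --- I checked that Lemmas \ref{Lemma 2.7.1}--\ref{Lemma 2.7.4} and Theorems \ref{Theorem 2.7.5}--\ref{Theorem 2.7.6} rely only on Theorems \ref{thm 2.5.2}, \ref{Theorem 2.4.1} and \ref{thm 2.4.4 and cor 2.4.6}, not on Theorem \ref{thm 2.5.3} or its corollaries, so there is no circularity --- but it is a forward reference to the heaviest machinery in the paper (Krein--Milman, L\"{o}wner's representation) to establish a fact that has a half-page matrix proof. Two details you must still supply: (a) Theorem \ref{Theorem 2.7.6} is stated for \emph{continuous} $f:[0,\infty)\to[0,\infty)$, whereas your $g=-f$ may jump at $0$ (one has only $g(0)\leqs g(0^+)$); you need the same regularization $g_0(0):=g(0^+)$ the paper uses, noting that the value at $0$ is irrelevant to the monotonicity of $g(t)/t$ on $(0,\infty)$; and (b) the statement that each summand $c_0/t$, $c_1$, $\lambda/(\lambda+t)$ is operator monotone decreasing should be justified by Proposition \ref{Lemma 2.5.5} applied after the substitution $t\mapsto t+\lambda$, which is fine. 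With those patches your argument is correct, but it buys generality you do not need at the price of a much longer dependency chain; the paper's block-matrix proof is both more elementary and self-contained within Section 3.
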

\begin{proof}
Assume that $f \leqs 0$ on $[0,\infty)$. First we prove that (vii) is equivalent to

(viii) $-f$ is operator monotone on $(0,\infty)$ and $f(0^+) \leqs f(0)$.

(vii) $\Rightarrow$ (viii). If (vii) holds, then $f(0^+)$ exists and $f(0^+) \leqs f(0)$.

(viii) $\Rightarrow$ (vii). Define $f_0$ on $[0,\infty)$ by $f_0(0) = f(0^+)$ and $f_0(t) = f(t)$ for $t>0$.
				Then $f_0$ is continuous on $[0,\infty)$.
				Hence $-f_0$ is operator monotone on $[0,\infty)$.
				Consider $A \geqs B \geqs 0$. Let $Q_0$ and $\tilde{Q}_0$ be the projection onto
                the kernels of $A$ and $B$, respectively.
				Then
				\begin{align*}
					f(A) = f_0(A) + \ap Q_0, \quad f(B) = f_0(B) + \ap \tilde{Q}_0,
				\end{align*}
				where $\ap :=f(0) - f(0^+) \geqs 0$. Since $A \geqs B \geqs 0$, we have $Q_0 \leqs \tilde{Q}_0$.
				With $f_0(A) \leqs f_0(B)$ this implies that $f(A) \leqs f(B)$.
				Thus, it suffices to prove that (i) $\Leftrightarrow$ (vii) for the function $f_0$.
Since $f_0$ is cont. on $[0,\infty)$, we can assume that $f$ is continuous.
			
	(vii) $\Rightarrow$ (iv). Define
	\begin{align*}
				\tilde{A} = \begin{bmatrix} A & 0 \\ 0 & 0 \end{bmatrix}, \;
								U = \begin{bmatrix} X & R \\ (I-X^*X)^{1/2}  & -X^* \end{bmatrix}, \;
				\tilde{B} = \begin{bmatrix} X^*AX + \ep I & 0 \\ 0  &  \bt I \end{bmatrix}
	\end{align*}
	for each $\ep,\bt>0$ where $R:= (I-XX^*)^{1/2}$.
	The computation shows that $\tilde{B} - U^* \tilde{A} U \geqs 0$ for sufficient large $\bt$.
	Then (vii) implies that
	\begin{align*}
		\begin{bmatrix}	f(X^*AX + \ep I) & 0 \\ 0 & f(\bt)I  \end{bmatrix}
			&=	f(\tilde{B})	\leqs	f(U^* \tilde{A} U)	=	U^* \begin{bmatrix}	f(A) & 0 \\ 0 & f(0)I
                \end{bmatrix} U \\
			&\leqs	U^* \begin{bmatrix}	f(A) & 0 \\ 0 & 0  \end{bmatrix} U	= \begin{bmatrix}	X^*f(A)X & * \\ * & *
        \end{bmatrix}.
	\end{align*}
	Hence	$f(X^*AX + \ep I) \leqs X^* f(A) X$. Letting $\ep \searrow 0$ yields $f(X^*AX) \leqs X^* f(A) X$.
	
	(i) $\Rightarrow$ (vii).	Consider $A \geqs B \geqs 0$.
	For each $0< \ld <1$, since $\ld A = \ld B + (1-\ld) \ld (1- \ld)^{-1} (A-B)$, we have
	\begin{align*}
		f(\ld A)	\leqs \ld f(B) + (1- \ld) f(\ld (1-\ld)^{-1} (A-B)) \leqs \ld f(B).
	\end{align*}
	Letting $\ld \nearrow 1$ yields $f(A) \leqs f(B)$, meaning that $-f$ is operator monotone on $[0,\infty)$.
\end{proof}

\begin{cor} \label{Cor 2.5.4}
		A function on $f:[0,\infty) \to [0,\infty)$ is operator monotone if and only if
		$f$ is operator concave.
\end{cor}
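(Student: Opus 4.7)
The plan is to read the corollary as a direct consequence of Theorem \ref{thm 2.5.3} applied to the function $g := -f$. Since $f$ takes values in $[0,\infty)$, we have $g \leqs 0$ on $[0,\infty)$, which is precisely the standing hypothesis of Theorem \ref{thm 2.5.3}.

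Concretely, I would first unpack both implications of the corollary through $g$. If $f$ is operator concave on $[0,\infty)$, then by definition $g = -f$ is operator convex on $[0,\infty)$; moreover $g(0) = -f(0) \leqs 0$ is automatic from $f \geqs 0$. Thus condition (i) of Theorem \ref{thm 2.5.2} holds for $g$, and since $\ap = \infty$ and $g \leqs 0$, Theorem \ref{thm 2.5.3} yields condition (vii) for $g$, namely that $-g = f$ is operator monotone on $[0,\infty)$. Conversely, if $f$ is operator monotone on $[0,\infty)$, then $-g = f$ is operator monotone, so condition (vii) of Theorem \ref{thm 2.5.3} holds for $g$; the theorem then gives condition (i) of Theorem \ref{thm 2.5.2} for $g$, i.e., $g$ is operator convex, which is exactly the operator concavity of $f$.

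There is essentially no obstacle beyond bookkeeping: the hypothesis $f \geqs 0$ is exactly what is needed so that $-f \leqs 0$ to trigger the equivalence in Theorem \ref{thm 2.5.3}, and it also takes care, for free, of the side condition $g(0) \leqs 0$ needed by clause (i) of Theorem \ref{thm 2.5.2}. So the proof reduces to one sentence citing Theorem \ref{thm 2.5.3} applied to $-f$, with a brief remark on why the sign hypothesis and the constraint at $0$ are satisfied.
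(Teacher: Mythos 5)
Your proposal is correct and is essentially the paper's own proof: the paper simply cites the equivalence of (i) and (vii) in Theorem \ref{thm 2.5.3}, which implicitly means applying that theorem to $-f$ exactly as you do. Your write-up just makes explicit the bookkeeping (that $-f \leqs 0$ supplies the standing hypothesis and that $-f(0) \leqs 0$ is automatic) that the paper leaves to the reader.
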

\begin{proof}
	This is the equivalence between (i) and (vii) of Theorem \ref{thm 2.5.3}.
\end{proof}

\begin{cor}	 \label{Cor 2.5.6}
	Consider the following statements for a function $f : (0,\infty) \to (0,\infty)$.
	\begin{enumerate}
			\item[(i)]	$f$ is operator monotone;
			\item[(ii)]	$t/f(t)$ is operator monotone;
			\item[(iii)]	$f$ is operator concave;
			\item[(iv)]	$1/f(t)$ is operator convex.
	\end{enumerate}
	We have (i) $\Leftrightarrow$ (ii) $\Leftrightarrow$ (iii) $\Rightarrow$ (iv).
\end{cor}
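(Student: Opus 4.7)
The plan is to route everything through statement $(\text{iii})$: first prove $(\text{i}) \Leftrightarrow (\text{iii})$ via Corollary \ref{Cor 2.5.4}, then deduce $(\text{iii}) \Rightarrow (\text{ii})$ from the concave form of Hansen's inequality (obtained by applying Theorem \ref{thm 2.5.2} to $-f$), use an involution trick for $(\text{ii}) \Rightarrow (\text{i})$, and finally derive $(\text{iii}) \Rightarrow (\text{iv})$ by combining operator concavity with the operator convexity of $t \mapsto t^{-1}$ from Proposition \ref{Lemma 2.5.5}. For the first step, $f$ is continuous on $(0,\infty)$ (by Theorems \ref{Theorem 2.4.1} and \ref{thm 2.4.4 and cor 2.4.6}), and in each case the limit $f(0^+) := \lim_{t \to 0^+} f(t)$ lies in $[0,\infty)$: in case $(\text{i})$, $f$ is increasing and bounded below by $0$; in case $(\text{iii})$, the tangent bound $f(t) \leqs f(1) + f'(1)(t-1)$ forces $f(0^+) < \infty$. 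Setting $\tilde f(0) := f(0^+)$ extends $f$ continuously to $\tilde f : [0,\infty) \to [0,\infty)$, and the perturbation $A \to A + \ep I$, $\ep \searrow 0$, transfers operator monotonicity (resp.\ operator concavity) from $(0,\infty)$ to $[0,\infty)$; Corollary \ref{Cor 2.5.4} applied to $\tilde f$ then yields $(\text{i}) \Leftrightarrow (\text{iii})$.

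For $(\text{iii}) \Rightarrow (\text{ii})$, applying the equivalence $(\text{i}) \Leftrightarrow (\text{iv})$ of Theorem \ref{thm 2.5.2} to $-\tilde f$ (which is operator convex and satisfies $-\tilde f(0) \leqs 0$) gives Hansen's inequality $X^* \tilde f(A) X \leqs \tilde f(X^* A X)$ for every $A \geqs 0$ and every contraction $X$. Given $A \geqs B > 0$, the substitution $X := A^{-1/2} B^{1/2}$ satisfies $X^* X = B^{1/2} A^{-1} B^{1/2} \leqs I$ and $X^* A X = B$, so Hansen's inequality reads
\[
    B^{1/2} A^{-1/2} f(A) A^{-1/2} B^{1/2} \leqs f(B).
\]
Conjugating both sides by $B^{-1/2}$ and using that $A$ commutes with $f(A)$ gives $A^{-1} f(A) \leqs B^{-1} f(B)$, and inverting (both sides are positive since $f > 0$) yields $A f(A)^{-1} \geqs B f(B)^{-1}$, i.e.\ $t/f(t)$ is operator monotone.

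For $(\text{ii}) \Rightarrow (\text{i})$, the map $f \mapsto t/f(t)$ is a positivity-preserving involution, so applying the already-proven implication $(\text{i}) \Rightarrow (\text{ii})$ to $g(t) := t/f(t)$ gives that $t/g(t) = f(t)$ is operator monotone. For $(\text{iii}) \Rightarrow (\text{iv})$, operator concavity of $f$ gives $f(\ld A + (1-\ld) B) \geqs \ld f(A) + (1-\ld) f(B) > 0$; inverting reverses the order, and Proposition \ref{Lemma 2.5.5} gives $[\ld f(A) + (1-\ld) f(B)]^{-1} \leqs \ld f(A)^{-1} + (1-\ld) f(B)^{-1}$, so chaining shows $1/f$ is midpoint operator convex and hence operator convex by continuity. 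The main difficulty lies in $(\text{iii}) \Rightarrow (\text{ii})$: one must verify that the continuous extension $\tilde f$ indeed meets the hypothesis $\tilde f(0) \geqs 0$ of the concave Hansen inequality, and then recognize that the substitution $X = A^{-1/2} B^{1/2}$ converts that inequality into the desired monotonicity of $t/f(t)$.
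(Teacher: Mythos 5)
Your proof is correct, and it draws on the same toolbox as the paper (Corollary \ref{Cor 2.5.4}, Theorem \ref{thm 2.5.2}/\ref{thm 2.5.3}, Proposition \ref{Lemma 2.5.5}), but the implications are organized differently. The paper proves (i) $\Rightarrow$ (ii) directly: it shifts to $f(t+\ep)$, feeds $-f(t+\ep)$ into Theorem \ref{thm 2.5.3} to conclude that $-f(t+\ep)/t$ is operator monotone, and then composes with the operator monotone map $s \mapsto -s^{-1}$ on $(-\infty,0)$ from Proposition \ref{Lemma 2.5.5}, repeating the same computation for the converse. You instead route through (iii): you invoke Hansen's inequality (condition (iv) of Theorem \ref{thm 2.5.2}) for $-\tilde f$ and carry out the substitution $X = A^{-1/2}B^{1/2}$ followed by an inversion --- this in effect re-derives, with signs flipped, the paper's internal step (iv) $\Rightarrow$ (iii) of Theorem \ref{thm 2.5.2} rather than citing its conclusion --- and you then get (ii) $\Rightarrow$ (i) for free from the involution $f \mapsto t/f(t)$, which is tidier than the paper's repetition. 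Your endpoint handling (continuous extension $\tilde f(0):=f(0^+)$) replaces the paper's $\ep$-shift and serves the same purpose; the steps (i) $\Leftrightarrow$ (iii) and (iii) $\Rightarrow$ (iv) coincide with the paper's. The one point worth tightening is the existence of $f(0^+)$ in the concave case: the tangent bound only gives an upper bound, so you should add that a concave function on $(0,\delta)$ that is bounded below is eventually monotone near $0$, whence the limit exists and lies in $[0,\infty)$.
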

\begin{proof}
(i) $\Rightarrow$ (ii).	
		For any $\ep>0$, $f(t+\ep)$ is operator monotone on $[0,\infty)$. 
		Theorem	\ref{thm 2.5.3} implies that $-f(t+\ep)/t$ is operator monotone on $(0,\infty)$.
		Proposition \ref{Lemma 2.5.5} then implies that
		\begin{align*}
			\frac{t}{f(t+\ep)} = -\left(-\frac{f(t + \ep)}{t} \right)^{-1}
		\end{align*}
		is operator monotone on $(0,\infty)$. Letting $\ep \searrow 0$ yields (ii).

(ii) $\Rightarrow$ (i).	
		For any $\ep>0$, $(t+\ep)/f(t+\ep)$ is operator monotone on $[0,\infty)$. 
		Theorem	\ref{thm 2.5.3} implies that $-(t+\ep)/tf(t+\ep)$ is operator monotone on $(0,\infty)$.
		Proposition \ref{Lemma 2.5.5} then implies that
		\begin{align*}
			\frac{t f(t+\ep)}{t+\ep} = -\left(-\frac{t + \ep}{t f(t+\ep)} \right)^{-1}
		\end{align*}
		is operator monotone on $(0,\infty)$. Letting $\ep \searrow 0$ yields (i).	

(i) $\Leftrightarrow$ (iii). By Corollary \ref{Cor 2.5.4}, we have that
			\begin{align*}
				f &\text{ is operator monotone on } (0,\infty)  \\
					&\Leftrightarrow f(t+\ep) \text{ is operator montone on } [0,\infty) \text{ for any } \ep>0 \\
					&\Leftrightarrow f(t+\ep) \text{ is operator concave on } [0,\infty) \text{ for any } \ep>0 \\
					&\Leftrightarrow	f \text{ is operator concave on } (0,\infty)
			\end{align*}
	(iii) $\Leftrightarrow$ (iv). Write $g(t) = 1/f(t)$. Let $A,B>0$ in $M_n$. By (iii),
	\begin{align*}
		f \left( \frac{A+B}{2}\right) \geqs \frac{f(A)+f(B)}{2}.
	\end{align*}
	Then Proposition \ref{Lemma 2.5.5} implies
	\begin{align*}
		g \left( \frac{A+B}{2}\right)
			&= f \left( \frac{A+B}{2}\right)^{-1} \leqs \left\{\frac{f(A)+f(B)}{2} \right\}^{-1} \\
			&\leqs	\frac{f(A)^{-1} + f(B)^{-1}}{2} = \frac{g(A)+g(B)}{2}.
	\end{align*}
	Hence $g$ is operator convex.
\end{proof}

\begin{ex}
    \begin{enumerate}
        \item[(i)]   For each $p \in [0,1]$, $t^p$ is operator concave on $[0,\infty)$.
        \item[(ii)]   The function $f(t) =(t-1)/\log t$ on $[0,\infty)$ where $f(0):= 0$ and $f(1):=1$.
        \item[(iii)]   The logarithmic function is operator monotone and operator concave on $(0,\infty)$.
        \item[(iv)]   The function $g(t) = t \log t$ is operator convex on $[0,\infty)$.
    \end{enumerate}
\end{ex}
\begin{proof}
    (i) It follows from the L\"{o}wner-Heinz inequality and Corollary \ref{Cor 2.5.4}.

    (ii) Note that $f(t) = \int_0^1 t^x \, dx$ for $t \geqs 0$.

    (iii) By (ii), $t/\log (1+t)$ is operator monotone function on $(0,\infty)$.
          Corollary \ref{Cor 2.5.6} then implies that $\log (1+t)$ is operator monotone and operator concave on $(0,\infty)$. Now, for each $\ep>0$, $\log (\ep+t) = \log \ep + \log(1+\ep^{-1}t)$ is operator monotone
          and operator concave on $(0,\infty)$. Letting $\ep \searrow 0$ yields the result.

    (iv)   Since $g$ is continuous on $[0,\infty)$ and $g(t)/t = \log t$ is operator monotone on $(0,\infty)$,
           $g$ is operator convex on $[0,\infty)$ by Theorem \ref{thm 2.5.2} .
\end{proof}

\section{Integral representations of operator monotone functions on the nonnegative reals}


The aim of this section is to show that every operator monotone function from $\R^+$ to itself
always arises as an integral of special operator monotone functions with respect to a Borel measure:

\begin{thm} \label{Theorem 2.7.6}
	A continuous function $f: [0,\infty) \to [0,\infty)$ is operator monotone if and only if
	there is a finite Borel measure $m$ on $[0,\infty]$ such that
	\begin{align}
		f(t) = \int_{[0,\infty]} \phi_t (\ld) \,dm(\ld), \quad t \in [0,\infty)
        \label{eq: int rep of operat mon on R+}
	\end{align}
	where
	\begin{align*}
		\phi_t (\ld) = \frac{t(1+ \ld)}{t+\ld} \; for \; \ld \in (0,\infty),
		\quad \phi_t (0)=1, \quad \phi_t (\infty) =t.
	\end{align*}
	Moreover, the measure $m$ is unique and we can write
	\begin{align*}
		f(t) = a+bt+\int_{(0,\infty)} \frac{t(1+ \ld)}{t+\ld} \,dm(\ld), \quad t \in [0,\infty)
	\end{align*}
	where $a:= m(\{0\}) = f(0)$ and $b:=m(\{\infty\}) = \lim_{t \to \infty} f(t)/t$.
\end{thm}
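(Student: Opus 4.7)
The plan is to establish the sufficient (``if'') direction by a direct verification and to obtain the necessary direction via L\"owner's theorem together with the Nevanlinna--Herglotz representation of Pick functions, then translate that representation into the form involving the kernel $\phi_t$.

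For the sufficient direction, I would first check that for each fixed $\ld \in [0,\infty]$ the function $t \mapsto \phi_t(\ld)$ is a nonnegative operator monotone function on $[0,\infty)$. The cases $\ld = 0$ (constant $1$) and $\ld = \infty$ (identity $t$) are trivial. For $\ld \in (0,\infty)$ I rewrite
\begin{align*}
    \phi_t(\ld) \;=\; \frac{t(1+\ld)}{t+\ld} \;=\; (1+\ld) \;-\; \frac{\ld(1+\ld)}{t+\ld},
\end{align*}
and observe that $t \mapsto -(t+\ld)^{-1}$ is operator monotone on $(0,\infty)$ by Proposition \ref{Lemma 2.5.5} (taking $c=-\ld \notin (0,\infty)$). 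Since the class of operator monotone functions on $[0,\infty)$ is closed under nonnegative linear combinations and pointwise limits, any Riemann approximation to the integral \eqref{eq: int rep of operat mon on R+} is operator monotone, and the pointwise limit then is too. Nonnegativity of $\phi_t$ guarantees $f \geqs 0$.

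The hard direction is the existence of $m$. The plan is to invoke L\"owner's theorem, which says that every operator monotone function on the open interval $(0,\infty)$ extends analytically through $(0,\infty)$ to a Pick function $\tilde{f}$ on $\C \setminus (-\infty,0]$ with $\mathrm{Im}\, \tilde f(z) \geqs 0$ for $\mathrm{Im}\, z >0$. Applying the Nevanlinna--Herglotz representation, I get
\begin{align*}
    \tilde f(z) \;=\; \ap + \bt z + \int_{\R} \left( \frac{1}{\ld - z} - \frac{\ld}{1+\ld^2} \right) d\nu(\ld),
    \qquad \bt \geqs 0,
\end{align*}
for a unique positive Borel measure $\nu$ with $\int (1+\ld^2)^{-1} d\nu(\ld) < \infty$. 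Because $\tilde f$ is analytic (and real-valued) across $(0,\infty)$, the measure $\nu$ must be supported on $(-\infty, 0]$. Substituting $\ld = -\mu$, $\mu \in [0,\infty)$, restricting to real $z = t \in (0,\infty)$, and combining the resulting integrand $\frac{-1}{\mu+t} + \frac{\mu}{1+\mu^2}$ with the affine part $\ap + \bt t$, I would algebraically reorganize the expression to isolate the factor $\phi_t(\mu) = t(1+\mu)/(t+\mu)$. Concretely, one checks that
\begin{align*}
    \frac{t(1+\mu)}{t+\mu} \;=\; (1+\mu) - \frac{\mu(1+\mu)}{t+\mu},
\end{align*}
so pushing forward $\nu$ to a measure $m_0$ on $(0,\infty)$ via $dm_0(\mu) = \frac{1+\mu^2}{\mu(1+\mu)}\,d\nu(-\mu)$ (with the constant term $(1+\mu)$ absorbed into adjustments of $\ap$ and $\bt$) yields $\int_{(0,\infty)} \phi_t(\mu)\,dm_0(\mu)$ plus affine terms in $t$. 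Finiteness of $m_0$ on $(0,\infty)$ follows from the Nevanlinna growth condition $\int (1+\mu^2)^{-1} d\nu < \infty$ together with the behavior of the weight near $0$ and $\infty$.

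It remains to absorb the leftover affine part into point masses at the endpoints of the compactification $[0,\infty]$. Continuity of $f$ at $0$ and the hypothesis $f \geqs 0$ force $\ap = f(0) \geqs 0$, which I assign as $m(\{0\}) = a$ since $\phi_t(0) = 1$. The coefficient $\bt \geqs 0$ becomes $m(\{\infty\}) = b$ since $\phi_t(\infty) = t$; one verifies $b = \lim_{t\to\infty} f(t)/t$ by dominated convergence on the integral (each $\phi_t(\mu)/t \to 0$ for $\mu \in [0,\infty)$ and is bounded by $1+\mu$, handled by finiteness of $m$). Uniqueness of $m$ is inherited from the uniqueness of the Nevanlinna--Herglotz measure after identifying the three contributions (point mass at $0$, absolutely continuous piece on $(0,\infty)$, point mass at $\infty$).

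The main obstacle I anticipate is not the Nevanlinna--Herglotz theorem itself but the careful bookkeeping in the change of variables: identifying which part of the classical representation becomes the $\phi_t$-integrand and which part collapses into the endpoint masses, together with verifying that the transformed measure is indeed \emph{finite} on all of $[0,\infty]$ (as opposed to merely satisfying the weaker Nevanlinna integrability). The positivity hypothesis $f \geqs 0$ is what upgrades integrability to finiteness, and this step deserves particular care.
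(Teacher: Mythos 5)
Your proposal is correct in outline but takes a genuinely different route from the paper. The paper never invokes L\"owner's theorem on Pick functions or the Nevanlinna--Herglotz representation. Instead it proves the representation by real-variable and functional-analytic means: it first establishes (Theorem \ref{Theorem 2.7.5}) that every non-constant operator monotone function on $(-1,1)$ has the form $f(0)+f'(0)\int_{-1}^{1}\frac{x}{1-\ld x}\,d\mu(\ld)$, by showing that the normalized class $\K$ is convex and compact in the topology of pointwise convergence (Lemmas \ref{Lemma 2.7.1}--\ref{Lemma 2.7.3}), identifying its extreme points as $x/(1-\ld x)$ (Lemma \ref{Lemma 2.7.4}), and applying Krein--Milman together with Banach--Alaoglu and a moment argument for uniqueness; the passage to $[0,\infty)$ is then the substitution $t=\psi(x)=(1+x)/(1-x)$, with the endpoint masses extracted from $f(0)$ and $\lim_{t\to\infty}f(t)/t$ much as in your argument. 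Your route imports the deepest fact of the subject (analytic continuation of operator monotone functions into the upper half-plane) as a black box, after which everything reduces to a change of variables; the paper's route is self-contained modulo the Hansen--Pedersen machinery it has already built. Both are standard derivations, and your treatment of the easy direction coincides with the paper's.

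Two bookkeeping points in your version need attention. First, the density you assign to $m_0$ should be $\frac{1}{\mu(1+\mu)}\,d\nu(-\mu)$ if $\nu$ is the raw Nevanlinna measure; your factor $\frac{1+\mu^2}{\mu(1+\mu)}$ is correct only after pre-normalizing $\nu$ by $(1+\ld^2)^{-1}$. Second, and more substantively, finiteness of $m_0$ near $\mu=0$ does \emph{not} follow from the Nevanlinna growth condition alone, since the weight behaves like $1/\mu$ there. You rightly flag that continuity of $f$ at $0$ together with $f\geqs 0$ must be used, but the step should be executed: letting $t\searrow 0$ and applying monotone convergence to $-\frac{1}{t+\mu}\searrow -\frac{1}{\mu}$ shows that $\int \frac{d\nu(-\mu)}{\mu(1+\mu^2)}<\infty$ and that $\nu$ has no atom at the origin; combined with $\int\frac{d\nu}{1+\mu^2}<\infty$ this yields the required finiteness of $m_0$ on all of $(0,\infty)$. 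With that supplied, your argument closes.
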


Hence there is a one-to-one correspondence between operator monotone functions on the nonnegative reals
and finite Borel measures on the extended half-line. The operator monotone functions $t \mapsto \phi_t (\ld)$
for each fixed $\ld \in [-1,1]$ form a building block for constructing general operator monotone functions on the
nonnegative reals. It follows immediately that the map $f \mapsto m$ is affine.

In order to prove Theorem \ref{Theorem 2.7.6}, we use the following theorem.

\begin{thm}[Krein-Milman]
		A convex compact subset of a locally convex topological vector space always has an extreme point.
Moreover, it is the closed convex hull of the set of its extreme points.
\end{thm}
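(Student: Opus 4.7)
The plan is to establish both assertions at once by first identifying a rich family of subsets of $K$ that inherit extreme-point information and then combining a Zorn's lemma argument with the Hahn-Banach separation theorem. Call a nonempty closed subset $F \subseteq K$ an \emph{extremal subset} (or face) if whenever $tx+(1-t)y \in F$ with $x,y \in K$ and $t \in (0,1)$, then both $x,y \in F$. The key observation is that an extreme point of $K$ is precisely a singleton extremal subset, and if $F$ is an extremal subset of $K$ and $F' \subseteq F$ is an extremal subset of $F$, then $F'$ is automatically an extremal subset of $K$.

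First I would show that $K$ has at least one extreme point. Order the collection $\mathcal{F}$ of extremal subsets of $K$ by reverse inclusion. Since $K$ is compact, any chain in $\mathcal{F}$ has nonempty intersection (by the finite intersection property), and this intersection is again closed and extremal; so Zorn's lemma yields a minimal extremal subset $F_0$. To see that $F_0$ is a singleton, suppose it contained two distinct points $x \neq y$. Applying Hahn-Banach separation in the locally convex space, choose a continuous linear functional $\Lambda$ with $\Lambda(x) \neq \Lambda(y)$, let $m := \max_{z \in F_0}\Lambda(z)$ (the maximum exists by compactness of $F_0$), and set
\begin{align*}
F_1 := \{z \in F_0 : \Lambda(z) = m\}.
\end{align*}
Then $F_1$ is nonempty, closed, strictly contained in $F_0$, and a straightforward check using linearity shows it is extremal in $F_0$, hence in $K$. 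This contradicts minimality, so $F_0 = \{x_0\}$ and $x_0$ is an extreme point.

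Second, letting $E$ denote the set of extreme points and $C := \overline{\mathrm{conv}}(E)$, I would show $K = C$. The inclusion $C \subseteq K$ is immediate from convexity and closedness of $K$. If $K \setminus C$ were nonempty, pick $x_0 \in K \setminus C$ and use Hahn-Banach separation to find a continuous linear functional $\Lambda$ with $\Lambda(x_0) > \sup_{c \in C}\Lambda(c)$. Set $m := \max_{z \in K}\Lambda(z)$ and $F := \{z \in K : \Lambda(z) = m\}$. Then $F$ is a nonempty compact convex extremal subset of $K$, and by the first part applied to $F$ in place of $K$, it has an extreme point $e$. Any extreme point of an extremal subset of $K$ is an extreme point of $K$, so $e \in E \subseteq C$; but $\Lambda(e) = m \geqs \Lambda(x_0) > \sup_{c \in C}\Lambda(c)$, a contradiction.

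The main obstacle is the separation step: it requires the ambient space to be locally convex so that Hahn-Banach produces continuous linear functionals that strictly separate a compact convex set from an exterior point, and one must be careful that the separating functional is genuinely continuous (not merely linear). The Zorn's lemma step is essentially formal once the finite-intersection compactness argument is in place, and the verification that the slice $\{z \in F : \Lambda(z) = m\}$ is again extremal is a one-line convexity check.
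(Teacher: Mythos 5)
Your proof is correct: it is the classical Zorn's lemma plus Hahn--Banach argument for the Krein--Milman theorem. The paper itself states this result without proof, as a standard fact imported from functional analysis to be applied to the set $\K$ inside $\R^{(-1,1)}$ with the product topology, so there is no in-paper argument to compare against; your write-up supplies the missing proof in essentially the canonical way. All the key steps check out: the intersection of a chain of nonempty closed extremal subsets is nonempty by the finite intersection property and is again extremal; the slice $\{z \in F_0 : \Lambda(z) = m\}$ is a proper extremal subset when $\Lambda$ separates two points of $F_0$; extremality is transitive, so an extreme point of the face $F$ in the second half is an extreme point of $K$. Two small points worth making explicit. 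First, the separation of two distinct points by a \emph{continuous} linear functional requires the locally convex space to be Hausdorff; the theorem as stated in the paper omits this hypothesis, but it is needed (in a non-Hausdorff locally convex space the conclusion can fail) and it holds in the paper's application, since a product of copies of $\R$ is Hausdorff. Second, the first assertion of course presupposes that the compact convex set is nonempty; you should say so, since Zorn's lemma is being applied to the collection of nonempty closed extremal subsets, which must itself be nonempty (it contains $K$).
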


Operator monotone functions on $(a,b)$ are transformed to those on a symmetric interval $(-1,1)$
via an affine function which is also operator monotone.
Recall that every operator monotone function on $(-1,1)$ is $C^1$ and $f'>0$ unless $f$ is a constant.
Denote by $\K$ the set of operator monotone functions $f$ on $(-1,1)$ such that $f(0)=0$ and $f'(0)=1$.
It is easy to see that $\K$ is convex. The next three lemmas establish that $\K$ is a compact subset
of the locally convex space of real-valued functions on $(-1,1)$.

\begin{lem} \label{Lemma 2.7.1}
	Let $f$ be an operator monotone function on $(-1,1)$.
	\begin{enumerate}
		\item[(1)]	Then for every $\ap \in [-1,1]$, $(x + \ap ) f(x)$ is operator convex on $(-1,1)$.
		\item[(2)]	If $f(0)=0$, then for every $\ap \in [-1,1]$, $g(x)=(1 + \frac{\ap}{x} ) f(x)$
                    is operator monotone on $(-1,1)$. Here, $g(0):= \lim_{x \to 0} g(x) = \ap f'(0)$.
		\item[(3)]	If $f(0)=0$, then $f$ is twice differentiable at $0$ and
					\begin{align*}
						\frac{f''(0)}{2} = \lim_{x \to 0} \frac{f(x) - f'(0)x}{x^2}.
					\end{align*}
	\end{enumerate}
\end{lem}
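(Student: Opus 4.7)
The plan is to handle the three parts in order, with each relying on the previous.

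For (1), I would invoke the claim established inside the proof of Theorem~\ref{thm 2.5.2} that a continuous operator monotone function $\varphi$ on $[0,\beta)$ yields $t\varphi(t)$ operator convex on $[0,\beta)$. Treat $\alpha=1$ first: via the substitution $y=x+1$ together with an $\varepsilon$-shift (needed because $f$ may fail to extend continuously to $-1$), the function $y\mapsto f(y+\varepsilon-1)$ is continuous and operator monotone on $[0,2-\varepsilon)$ by Theorem~\ref{Theorem 2.4.1}, so the cited claim forces $(x+1-\varepsilon)f(x)$ to be operator convex on $(\varepsilon-1,1)$; letting $\varepsilon\searrow 0$ gives $(x+1)f(x)$ operator convex on $(-1,1)$. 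The case $\alpha=-1$ is symmetric via $y=1-x$. For general $\alpha\in[-1,1]$, I would write $(x+\alpha)f(x)=\tfrac{1+\alpha}{2}(x+1)f(x)+\tfrac{1-\alpha}{2}(x-1)f(x)$, a nonnegative combination of operator convex functions.

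For (2), part (1) gives that $h_\alpha(x):=(x+\alpha)f(x)=xg(x)$ is operator convex on $(-1,1)$ with $h_\alpha(0)=0$; continuity of $g$ at $0$ with value $\alpha f'(0)$ is immediate since $f\in C^1$ forces $f(x)/x\to f'(0)$. Restricting $h_\alpha$ to $[0,1)$, the equivalence (i)$\,\Leftrightarrow\,$(iii) of Theorem~\ref{thm 2.5.2} produces $g$ operator monotone on $(0,1)$. For the negative half, $k(x):=h_\alpha(-x)$ is again operator convex on $(-1,1)$ with $k(0)=0$, so the same application yields $k(x)/x=-g(-x)$ operator monotone on $(0,1)$, equivalently $g$ operator monotone on $(-1,0)$. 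The main anticipated obstacle is upgrading these two half-interval conclusions to operator monotonicity on all of $(-1,1)$: tuples $\lambda_1<\cdots<\lambda_n$ straddling $0$ do not arise as limits of one-sided configurations, so the positivity of $[g^{[1]}(\lambda_i,\lambda_j)]$ does not close up by a mere continuity argument. My intended route is to extend the Hansen-Pedersen implication (i)$\,\Rightarrow\,$(iv) of Theorem~\ref{thm 2.5.2} to the symmetric interval -- the $2n\times 2n$ unitary-dilation argument goes through verbatim, the condition $h_\alpha(0)=0$ replacing the hypothesis $f(0)\leqs 0$ -- giving $h_\alpha(X^*AX)\leqs X^*h_\alpha(A)X$ for self-adjoint $A$ with $\sigma(A)\subseteq(-1,1)$ and $\norm{X}\leqs 1$. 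This inequality, combined with a limiting/approximation procedure around the singular spectral point $0$, should then recover operator monotonicity of $g=h_\alpha/x$ globally on $(-1,1)$.

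For (3), I would specialize (2) with $\alpha=1$: the function $g(x)=(1+1/x)f(x)$ is operator monotone on $(-1,1)$ with $g(0)=f'(0)$, hence $C^1$ on $(-1,1)$ by Theorem~\ref{Theorem 2.4.1}, so $g'(0)$ exists. Rearranging for $x\neq 0$,
\begin{align*}
\frac{g(x)-g(0)}{x} \;=\; \frac{f(x)}{x}+\frac{f(x)-f'(0)x}{x^2},
\end{align*}
and letting $x\to 0$, the relation $\lim f(x)/x=f'(0)$ forces the limit $c:=\lim_{x\to 0}(f(x)-f'(0)x)/x^2$ to exist and equal $g'(0)-f'(0)$. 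To confirm that $f$ is twice differentiable at $0$ in the classical sense with $f''(0)=2c$, I would invert to $f(x)=xg(x)/(x+1)$ on $(-1,1)$, differentiate to express $f'$ explicitly in terms of $g$ and $g'$, and evaluate $\lim_{x\to 0}(f'(x)-f'(0))/x$ using the $C^1$ regularity of $g$; a routine computation delivers the value $2(g'(0)-f'(0))=2c$, completing the proof.
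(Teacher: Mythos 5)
Parts (1) and (3) of your proposal are correct and essentially coincide with the paper's argument: for (1) you use the same $\varepsilon$-shift together with the claim inside the proof of Theorem \ref{thm 2.5.2} and the decomposition $(x+\ap)f(x)=\tfrac{1+\ap}{2}(x+1)f(x)+\tfrac{1-\ap}{2}(x-1)f(x)$; for (3) your computation with $g(x)=(1+\tfrac1x)f(x)$ is the paper's computation with $h(x)=f(x)/x$ in the equivalent guise $g=f+h$, and it checks out.

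The gap is in part (2), and it is precisely the obstacle you flag yourself. Operator monotonicity of $g$ on $(0,1)$ and on $(-1,0)$ separately does not yield operator monotonicity on $(-1,1)$, and your proposed repair does not close the hole. Extending (i)$\Rightarrow$(iv) of Theorem \ref{thm 2.5.2} to the symmetric interval is indeed unproblematic (the dilation argument only needs $h_\ap(0)=0$), but the step that converts the Jensen inequality $h_\ap(X^*AX)\leqs X^*h_\ap(A)X$ into monotonicity of $h_\ap(t)/t$ --- the implication (iv)$\Rightarrow$(iii) --- requires writing $B=X^*AX$ with $\norm{X}\leqs 1$, i.e.\ $X=A^{-1/2}B^{1/2}$, which is available only when $0<B\leqs A$. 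For $A\geqs B$ with spectra straddling $0$ (already $A=0$, $B=-\tfrac12 I$ admits no such $X$) there is nothing to approximate, so ``a limiting procedure around the singular spectral point $0$'' has no valid configurations to take limits of. The paper avoids all of this by invoking the ``Moreover'' clause of Theorem \ref{thm 2.4.4 and cor 2.4.6} (Kraus): if $g$ is operator convex on $(a,b)$, then $g^{[1]}(\ld,\cdot)$ is operator monotone on \emph{all} of $(a,b)$. Applied with $\ld=0$ to $g(x)=(x+\ap)f(x)$, which vanishes at $0$ because $f(0)=0$, this gives $g^{[1]}(0,x)=g(x)/x=(1+\tfrac{\ap}{x})f(x)$ operator monotone on the whole of $(-1,1)$ in one line. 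You should replace the half-interval argument by this citation (or prove the Kraus statement directly, e.g.\ via the second-divided-difference characterization in the same theorem, noting $g^{[2]}(0,\ld_i,\ld_j)=\bigl(g^{[1]}(0,\cdot)\bigr)^{[1]}(\ld_i,\ld_j)$).
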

\begin{proof}
	$(1)$
	Let $\ap \in [-1,1]$. Note that
	\begin{align*}
		(x+\ap) f(x) = \frac{1+\ap}{2} (x+1)f(x) + \frac{1-\ap}{2} (x-1)f(x).
	\end{align*}
	For each $\ep \in (0,1)$, $f(x+1-\ep)$ is operator monotone on $(-\ep, 2-\ep)$
	and hence on $[0,2-\ep)$.
	Theorem \ref{thm 2.5.2} implies that $xf(x-1+\ep)$ is operator convex on $[0,2-\ep)$.
	Hence $(x+1-\ep) f(x)$ is operator convex on $(-1+\ep,1)$.
	Letting $\ep \searrow 0$ yields that $(x+1)f(x)$ is operator convex on $(-1,1)$.
	Similarly, $(x-1)f(x)$ is operator convex on $(-1,1)$.

	$(2)$ For each $\ap \in [-1,1]$, set $g(x) = (x+ \ap) f(x)$.
	By (1) and Theorem \ref{thm 2.4.4 and cor 2.4.6},
	\begin{align*}
		g^{[1]} (0,x) = \frac{g(0)-g(x)}{0-x} = \frac{g(x)}{x} = (1+ \frac{\ap}{x})f(x),
		 \quad x \ne 0
	\end{align*}
	is operator monotone on $(-1,1)$.
	
	$(3)$ By (2), $(1+ \frac{1}{x}) f(x)$ and $f(x)$
	are $C^1$ on $(-1,1)$. Define $h: (-1,1) \to \R$ by
	\begin{align*}
		h(x) = \begin{cases} f(x)/x, & x \ne 0 \\ f'(0) & x=0.\end{cases}
	\end{align*}
	Then $h$ is $C^1$. An elementary calculation shows that
	\begin{align*}
		\lim_{x \to 0} \frac{f'(x) - f'(0)}{x-0} = 2h'(0)
		= 2 \lim_{x \to 0} \frac{f(x) - f'(0)x}{x^2}.
	\end{align*}
\end{proof}

\begin{lem} \label{Lemma 2.7.2}
	If $f \in \K$, then
	\begin{align}
		f(x) \quad &\leqs \quad \frac{x}{1-x}, \quad 0 \leqs x <1,    \label{eq: 1}\\
		f(x) \quad &\geqs \quad \frac{x}{1+x}, \quad -1 < x \leqs 0,  \label{eq: 2}\\
		|f''(0)| \quad &\leqs \quad 2     \label{eq: 3}.
	\end{align}
\end{lem}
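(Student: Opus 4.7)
The plan is to deduce all three inequalities from the fact that $g_\ap(x) := (1 + \ap/x)f(x)$ is operator monotone on $(-1,1)$ for every $\ap \in [-1,1]$, as guaranteed by Lemma \ref{Lemma 2.7.1}(2). Since every operator monotone function is monotone increasing in the usual sense, this gives us a family of scalar monotonicity inequalities on the real line, and we will choose the endpoint values $\ap = \pm 1$ to get the tightest bounds.

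First I would establish \eqref{eq: 1}. Take $\ap = -1$, so that $g_{-1}(x) = (1 - 1/x)f(x) = (x-1)f(x)/x$ on $(-1,1)$, with $g_{-1}(0) = -f'(0) = -1$. Since $g_{-1}$ is increasing, for $0 \leqs x < 1$ we have $(x-1)f(x)/x \geqs -1$. Multiplying by the positive number $x$ and rearranging using $1-x > 0$ gives $f(x) \leqs x/(1-x)$. Inequality \eqref{eq: 2} is proved symmetrically: take $\ap = 1$, so that $g_1(x) = (1+1/x)f(x)$ with $g_1(0) = f'(0) = 1$; for $-1 < x \leqs 0$, monotonicity gives $(1 + 1/x)f(x) \leqs 1$, and multiplying by $x < 0$ (which flips the inequality) and using $1+x > 0$ yields $f(x) \geqs x/(1+x)$.

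For \eqref{eq: 3}, I would invoke Lemma \ref{Lemma 2.7.1}(3), which (using $f'(0)=1$) gives
\begin{align*}
    \frac{f''(0)}{2} \;=\; \lim_{x \to 0} \frac{f(x)-x}{x^2}.
\end{align*}
A quick computation shows $x/(1-x) - x = x^2/(1-x)$ and $x/(1+x) - x = -x^2/(1+x)$. So for $0 < x < 1$, inequality \eqref{eq: 1} yields $(f(x)-x)/x^2 \leqs 1/(1-x)$, whose right-hand limit at $0$ is $1$; and for $-1 < x < 0$, inequality \eqref{eq: 2} yields $(f(x)-x)/x^2 \geqs -1/(1+x)$, whose left-hand limit at $0$ is $-1$. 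Passing to the limits (which exist by Lemma \ref{Lemma 2.7.1}(3)) gives $-1 \leqs f''(0)/2 \leqs 1$, i.e. $|f''(0)| \leqs 2$.

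The reasoning is essentially routine given Lemma \ref{Lemma 2.7.1}; the only point that needs a bit of care is sign-tracking when multiplying the monotonicity inequality by $x$ of unknown (or opposite) sign on the two half-intervals, which is exactly why the cases $\ap = -1$ and $\ap = +1$ split between \eqref{eq: 1} and \eqref{eq: 2}. No deeper obstacle is expected.
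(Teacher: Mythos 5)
Your proof is correct, but for the two pointwise bounds it takes a genuinely different (and cleaner) route than the paper. The paper derives \eqref{eq: 1} and \eqref{eq: 2} by combining two ingredients: the inequality $f(x)^2/x^2 \leqs f'(x)$, read off from positivity of the $2\times 2$ divided-difference matrix (i.e.\ $2$-monotonicity of $f$), and the fact that the derivative of the operator convex function $(x\pm 1)f(x)$ from Lemma \ref{Lemma 2.7.1}(1) is increasing, which yields the differential inequalities $f(x)+(x-1)f'(x)\geqs -1$ on $(0,1)$ and $f(x)+(x+1)f'(x)\leqs 1$ on $(-1,0)$; a short argument by contradiction then produces the bounds. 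You instead apply Lemma \ref{Lemma 2.7.1}(2) with $\ap=\mp 1$: since $g_{\mp 1}(x)=(1\mp \frac{1}{x})f(x)$ is operator monotone, hence increasing, with $g_{\mp 1}(0)=\mp f'(0)=\mp 1$, comparing $g_{\mp 1}(x)$ with $g_{\mp 1}(0)$ and clearing denominators (with the sign-tracking you carry out) gives \eqref{eq: 1} and \eqref{eq: 2} directly, with no appeal to $f'$ away from $0$ and no contradiction step. Since Lemma \ref{Lemma 2.7.1}(2) is established before this lemma and is itself a consequence of part (1), there is no circularity; what your route buys is brevity and the avoidance of the differential inequalities altogether, at the cost of leaning on the slightly higher-level statement (2) rather than on the raw $2$-monotonicity condition. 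For \eqref{eq: 3} you do exactly what the paper intends (it says only to combine Lemma \ref{Lemma 2.7.1}(3) with the first two inequalities), and you supply the computation $\frac{x}{1-x}-x=\frac{x^2}{1-x}$, $\frac{x}{1+x}-x=-\frac{x^2}{1+x}$ and the passage to one-sided limits that makes it precise. Everything checks out.
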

\begin{proof}
For each $x \in (-1,1)$, since $f$ is $2$-monotone, we have
\begin{align*}
	\begin{bmatrix}
		f^{[1]}(x,x) & f^{[1]}(x,0) \\ f^{[1]}(0,x) & f^{[1]}(0,0)\end{bmatrix}
		=
	\begin{bmatrix} f'(x) & f(x)/x \\ f(x)/x & 1 \end{bmatrix}
	\geqs 0,
\end{align*}
and hence
	\begin{align}
		f(x)^2 /x^2 \leqs f'(x).  \label{eq: 2.7.1}
	\end{align}
	By Lemma \ref{Lemma 2.7.1}(1), $g(x):=(x \pm 1)f(x)$ is operator convex on $(-1,1)$.
	Theorem  \ref{thm 2.4.4 and cor 2.4.6} implies that
	\begin{align*}
		g'(x) = \lim_{y \to x} \frac{g(y)-g(x)}{y-x} = \lim_{y \to x} g^{[1]}(y,x),
		\quad x \in (-1,1)
	\end{align*}
	is operator monotone on $(-1,1)$. In particular, it is increasing on $(-1,1)$.
	This implies that
	\begin{align}
		f(x) + (x-1)f'(x) &\geqs -1, \quad 0<x<1,     \label{eq: 2.7.2}\\
		f(x) + (x+1)f'(x) &\leqs -1, \quad -1<x<0.    \label{eq: 2.7.3}
	\end{align}
	From \eqref{eq: 2.7.1} and \eqref{eq: 2.7.2}, we obtain $f(x)+1 \geqs (1-x)f(x)^2 / x^2$.
    If $f(x)> x/(x-1)$ for some $x \in (0,1)$, then
    \begin{align*}
        f(x)+1 > \frac{(1-x)f(x)^2}{x^2} \cdot \frac{x}{1-x} = \frac{f(x)}{x}
    \end{align*}
	so that $f(x) < x/(x-1)$. Hence $f(x) \leqs x/(1-x)$ for all $x \in [0,1)$.
    Similarly, using \eqref{eq: 2.7.1} and \eqref{eq: 2.7.3}, $f(x) \geqs x/(1-x)$ for all $x \in (-1,0]$.
	To prove \eqref{eq: 3}, use Lemma \ref{Lemma 2.7.2} and the inequalities \eqref{eq: 1}
	and \eqref{eq: 2}.
\end{proof}

\begin{lem} \label{Lemma 2.7.3}
	The set $\K$ is compact if it is considered as a subset of a topological vector space
	consisting of real functions on $(-1,1)$ with the locally convex topology of pointwise convergence.
\end{lem}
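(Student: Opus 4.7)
The plan is to apply Tychonoff's theorem: realise $\K$ as a closed subset of a compact product of bounded intervals in $\R^{(-1,1)}$ equipped with the product topology, which is by definition the topology of pointwise convergence.

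First I would record pointwise boundedness. Combining the bounds \eqref{eq: 1} and \eqref{eq: 2} of Lemma \ref{Lemma 2.7.2} with the fact that every $f\in\K$ is monotone increasing with $f(0)=0$, for each $x\in(-1,1)$ the value $f(x)$ is trapped in a compact interval $I_x\subset\R$ (namely $[0,x/(1-x)]$ for $x\geqs 0$ and $[x/(1+x),0]$ for $x\leqs 0$). Hence $\K\subseteq \prod_{x\in(-1,1)} I_x$, a compact set by Tychonoff, and it remains only to show that $\K$ is closed in $\R^{(-1,1)}$.

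To do so, I would take a net $(f_\beta)\subset\K$ with $f_\beta\to f$ pointwise and verify the three defining conditions of $\K$. The equality $f(0)=\lim_\beta f_\beta(0)=0$ is automatic, and $f$ is operator monotone because the class of operator monotone functions on $(-1,1)$ is closed under pointwise limits. The delicate point is $f'(0)=1$. Here I would invoke Lemma \ref{Lemma 2.7.1}(2) with $\ap=1$: each
\begin{align*}
    g_\beta(x) \;=\; \left(1+\tfrac{1}{x}\right)f_\beta(x), \qquad g_\beta(0):=f_\beta'(0)=1,
\end{align*}
is operator monotone on $(-1,1)$. The pointwise limit $\tilde g$ of $(g_\beta)$ satisfies $\tilde g(x)=(1+1/x)f(x)$ for $x\neq 0$ and $\tilde g(0)=1$, and it is operator monotone (hence $C^1$, and in particular continuous at $0$, by Theorem \ref{Theorem 2.4.1}). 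Since Theorem \ref{Theorem 2.4.1} also gives that $f$ is $C^1$ with $f(0)=0$, letting $x\to 0$ yields
\begin{align*}
    1 \;=\; \tilde g(0) \;=\; \lim_{x\to 0}\left(f(x)+\tfrac{f(x)}{x}\right) \;=\; f(0)+f'(0) \;=\; f'(0),
\end{align*}
so $f\in\K$, proving closedness.

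The main obstacle is precisely the preservation of the normalisation $f'(0)=1$: the bounds of Lemma \ref{Lemma 2.7.2} alone give only $0\leqs f'(0)\leqs 1$, and the derivative condition is not manifestly a pointwise one. The key manoeuvre I expect to carry the proof is the use of Lemma \ref{Lemma 2.7.1}(2) to encode the derivative at $0$ as the pointwise value at $0$ of the auxiliary operator monotone function $(1+1/x)f(x)$, which is then transported cleanly across the pointwise limit by the continuity built into Theorem \ref{Theorem 2.4.1}.
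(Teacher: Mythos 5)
Your proof is correct and follows the paper's own argument: Tychonoff's theorem plus the pointwise bounds of Lemma \ref{Lemma 2.7.2} give compactness of the ambient product, and the key step of encoding the normalisation $f'(0)=1$ via the auxiliary operator monotone function $(1+\frac{1}{x})f(x)$ from Lemma \ref{Lemma 2.7.1}(2) is exactly the paper's device. The only (harmless) variation is at the very end: the paper extracts the sandwich inequalities $(1-\frac{1}{x})f_i(-x)\leqs 1\leqs (1+\frac{1}{x})f_i(x)$ from the monotonicity of each auxiliary function and then estimates the one-sided difference quotients of the limit $f$, whereas you pass the whole net $g_\beta$ to its pointwise limit $\tilde g$ and use that $\tilde g$, being itself operator monotone and hence continuous by Theorem \ref{Theorem 2.4.1}, must satisfy $1=\tilde g(0)=\lim_{x\to 0}(1+\frac{1}{x})f(x)=f'(0)$ --- a slightly more streamlined route to the same conclusion.
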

\begin{proof}
    Recall that the space $\R^{(-1,1)}$ of functions from $(-1,1)$ to $\R$ is homeomorphic to the
    space $\prod_{x \in (-1,1)} \R$ with product topology.
    By Lemma \ref{Lemma 2.7.2}, the set $A_x:=\{f(x): f \in \K\}$ is bounded for each $x \in (-1,1)$.
    Then
    \begin{align*}\overline{\prod_{x \in (-1,1)} A_x} = \prod_{x \in (-1,1)} \overline{A_x} \end{align*}
    is compact by Tychonoff's theorem.
    We will show that $\K$ is closed in $ \prod_{x \in (-1,1) } \R \cong \R^{(-1,1)}$.
    To show that $\K$ is closed in $\R^{(-1,1)}$, let $\{f_i\}$ be a net in $\K$ converging to a function $f$ on $(-1,1)$. It is clear that $f$ is operator monotone on $(-1,1)$ and $f(0)=0$.
    Lemma \ref{Lemma 2.7.1} implies that $(1+\frac{1}{x})f_i (x)$ is operator monotone on $(-1,1)$ for every $i$.
    Then Lemma \ref{Lemma 2.7.2} and the fact that $\lim_{x \to 0} (1+\frac{1}{x}) f_i(x) = f_i'(0)=1$ yield
    that for each $i$
    \begin{align*}
        (1 - \frac{1}{x}) f_i (-x) \leqs 1 \leqs (1 + \frac{1}{x}) f_i (x), \quad x \in (0,1).
    \end{align*}
    By continuity,
    \begin{align*}
        (1 - \frac{1}{x}) f (-x) \leqs 1 &\leqs (1 + \frac{1}{x}) f (x), \quad x \in (0,1), \\
        (1 + \frac{1}{x}) f ( x) \leqs 1 &\leqs (1 - \frac{1}{x}) f (-x), \quad x \in (-1,0).
    \end{align*}
    Now,
    \begin{align*}
        \lim_{x \to 0^+} \frac{f(x)-f(0)}{x-0} &= \lim_{x \to 0^+} (1+\frac{1}{x})f(x) \geqs 1 \\
        \lim_{x \to 0^-} \frac{f(x)-f(0)}{x-0} &= \lim_{x \to 0^-} (1+\frac{1}{x})f(x) \leqs 1.
    \end{align*}
    Since $f$ is $C^1$ on $(-1,1)$, this forces $f'(0)=1$, i.e. $f \in \K$.
\end{proof}

\begin{lem} \label{Lemma 2.7.4}
	The extreme points of $\K$ are of the form
	\begin{align*}
		f(x) = \frac{x}{1- \ld x}, \quad where \quad \ld \in [-1,1].  
	\end{align*}
\end{lem}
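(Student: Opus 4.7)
The plan is to prove both containments: every extreme point of $\K$ has the claimed form, and conversely every $f_\ld(x) := x/(1-\ld x)$ with $\ld \in [-1,1]$ is in fact extreme.

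For the forward direction, I would start from an extreme $f \in \K$ and set $\ld := f''(0)/2$, which by Lemma \ref{Lemma 2.7.2} lies in $[-1,1]$. The key tool is Lemma \ref{Lemma 2.7.1}(2) applied with $\ap = \pm 1$: the two companions $g_{\pm}(x) := (1 \pm 1/x)f(x)$ are operator monotone on $(-1,1)$ with $g_{\pm}(0) = \pm 1$. A short Taylor expansion of $f$ at $0$ (using $f(0)=0$, $f'(0)=1$, $f''(0) = 2\ld$) yields $g_{\pm}'(0) = 1 \pm \ld$. When $|\ld| < 1$, both $1 \pm \ld$ are strictly positive, so the normalizations $\tilde g_{\pm} := (g_{\pm} \mp 1)/(1 \pm \ld)$ belong to $\K$; the identity $g_{+} + g_{-} = 2f$ then rewrites as the nontrivial convex combination
\begin{align*}
f \;=\; \frac{1+\ld}{2}\,\tilde g_{+} \;+\; \frac{1-\ld}{2}\,\tilde g_{-}.
\end{align*}
Extremeness of $f$ forces $\tilde g_{+} = f$, and unravelling this scalar equation yields $f(x) = x/(1-\ld x)$ directly. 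For the boundary $\ld = 1$ the normalization degenerates, but instead $g_{-}'(0) = 0$, so Theorem \ref{Theorem 2.4.1} (operator monotone functions on $(-1,1)$ are strictly increasing unless constant) forces $g_{-} \equiv -1$, immediately giving $f(x) = x/(1-x)$; the case $\ld = -1$ is symmetric.

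For the converse direction, one first checks $f_\ld \in \K$: the values $f_\ld(0)=0$ and $f_\ld'(0)=1$ are immediate, and operator monotonicity follows from the remark after Proposition \ref{Lemma 2.5.5}, since for $|\ld| \leqs 1$ the pole $1/\ld$ lies outside $(-1,1)$. To see extremeness, note that $f_\ld$ satisfies the sharp identity $f_\ld'(x) = (f_\ld(x)/x)^2$, so the $2\times 2$ matrix of first divided differences at the nodes $\{x,0\}$ has rank $1$. If $f_\ld = (g+h)/2$ with $g,h \in \K$, averaging the corresponding positive semidefinite matrices produces this rank-$1$ matrix as the mean of two positive semidefinites with bottom-right entry $1$; each summand must therefore also be rank $1$, i.e., $g$ and $h$ saturate the inequality $u'(x) \geqs (u(x)/x)^2$. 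Solving the resulting separable ODE under $u(0)=0$ and $u'(0)=1$ gives $g(x) = x/(1-\mu x)$ and $h(x) = x/(1-\nu x)$, and matching $(g+h)/2 = f_\ld$ (comparing low-order Taylor coefficients) forces $\mu = \nu = \ld$.

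The most delicate step is the handling of the boundary cases $\ld = \pm 1$ in the forward direction, where the decomposition argument degenerates because one of $1 \pm \ld$ vanishes; the dichotomy from Theorem \ref{Theorem 2.4.1} rescues the argument by forcing the relevant companion $g_\pm$ to be a constant.
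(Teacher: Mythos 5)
Your forward direction is essentially the paper's own argument: both proofs use Lemma \ref{Lemma 2.7.1}(2) to build the companions $(1+\ap/x)f(x)$, normalize them into $\K$, and exhibit $f$ as a convex combination whose triviality under extremality forces $f(x)=x/(1-\ld x)$ with $\ld=f''(0)/2$. The one real difference in this half is the parameter choice: the paper takes $\ap$ in the open interval $(-1,1)$, so both weights $\frac{1}{2}\bigl(1\pm\frac{1}{2}\ap f''(0)\bigr)$ are automatically strictly positive and no degenerate case occurs, whereas your choice $\ap=\pm1$ obliges you to treat $\ld=\pm1$ separately via the dichotomy of Theorem \ref{Theorem 2.4.1} (operator monotone and non-constant implies $f'>0$). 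Your boundary argument is correct, but the interior choice of $\ap$ avoids it entirely. Your converse --- that each $x/(1-\ld x)$ actually is an extreme point --- is genuinely additional content: the paper proves only the stated inclusion, which is all that the Krein--Milman step in Theorem \ref{Theorem 2.7.5} needs. That converse argument is sound, though it can be shortened: once the vector $v=(1,\,-f_\ld(x)/x)^T$ annihilates the rank-one matrix of divided differences of $f_\ld$ at the nodes $\{x,0\}$, positive semidefiniteness gives $M_g v=0$ for each summand $g$, and the second row of that equation already yields $g(x)/x=f_\ld(x)/x$ pointwise, so $g=f_\ld$ without integrating the ODE $u'=(u/x)^2$.
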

\begin{proof}
    Let $f$ be an extreme point of $\K$. For each $\ap \in (-1,1)$, define
    \begin{align*}
        g_{\ap}(x) = (1+ \frac{\ap}{x})f(x) - \ap, \quad x \in (-1,1).
    \end{align*}
    By Lemma \ref{Lemma 2.7.1}(2), $g_{\ap}$ is operator monotone.
    Note that $g_{\ap}(0) := \lim_{x \to 0} g_{\ap}(x) = 0$.
    By Lemma \ref{Lemma 2.7.1}(3) $g_{\ap}'(0) = 1+\frac{1}{2}f''(0)$.
    Since $g_{\ap}'(0) > 0$ by Lemma \ref{Lemma 2.7.2}, the function
    $
        h_{\ap}(x) := g_{\ap}(x)/g'_{\ap}(0)
    $, $x \in (-1,1)$,
    belongs to $\K$.
    Since $f$ can be written as a convex combination of two elements in $\K$, namely,
    \begin{align*}
        f = \frac{1}{2} (1+ \frac{1}{2} \ap f''(0)) h_{\ap} + \frac{1}{2} (1 - \frac{1}{2} \ap f''(0)) h_{-\ap},
    \end{align*}
    the extremality of $f$ implies that $f = h_{\ap}$. We can solve for $f$ so that $f(x)=x/ (1-\frac{1}{2}f''(0)x)$.
    Since $|f''(0)| \leqs 2$, $\ld:=f''(0)$ can be varied in $[-1,1]$.
\end{proof}

\begin{thm} \label{Theorem 2.7.5}
	Let $f$ be a non-constant operator monotone function on $(-1,1)$. Then there is a unique probability Borel measure $\mu$ on $[-1,1]$ such that
	\begin{align*}
		f(x) = f(0) + f'(0) \int_{-1}^{1} \frac{x}{1- \ld x} \,d\mu(\ld), \quad x \in (-1,1).
	\end{align*}
\end{thm}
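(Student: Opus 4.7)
The plan is to reduce to the convex compact set $\K$ and then extract a probability measure out of Krein-Milman via a continuous parametrization of the extreme points. Since $f$ is non-constant, Theorem \ref{Theorem 2.4.1} gives $f'(0) > 0$, so the normalization $g(x) := (f(x) - f(0))/f'(0)$ lies in $\K$. It then suffices to produce a unique probability Borel measure $\mu$ on $[-1,1]$ with
\[
    g(x) \;=\; \int_{-1}^{1} \frac{x}{1 - \ld x}\, d\mu(\ld), \quad x \in (-1,1),
\]
from which the representation for $f$ follows by scaling and translating.

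For existence, I would introduce the affine map $\Phi : \mathcal{P}([-1,1]) \to \R^{(-1,1)}$ given by $\Phi(\mu)(x) = \int_{-1}^{1} \frac{x}{1 - \ld x}\, d\mu(\ld)$, where $\mathcal{P}([-1,1])$ denotes the probability Borel measures on $[-1,1]$ with the weak-$*$ topology. For each fixed $x \in (-1,1)$ the integrand $\ld \mapsto x/(1-\ld x)$ is continuous on $[-1,1]$, so $\Phi$ is continuous into the space $\R^{(-1,1)}$ with the topology of pointwise convergence. By the Banach-Alaoglu theorem $\mathcal{P}([-1,1])$ is compact, so $\Phi(\mathcal{P}([-1,1]))$ is a compact (hence closed) convex subset of $\K$, and it contains every extreme point of $\K$ because $\Phi(\delta_\ld)(x) = x/(1 - \ld x)$ is precisely the family described in Lemma \ref{Lemma 2.7.4}. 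Krein-Milman then forces $\Phi(\mathcal{P}([-1,1])) = \K$, which produces a representing measure $\mu$ for $g$.

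For uniqueness, I would use a moment argument. The geometric series $\frac{x}{1 - \ld x} = \sum_{n \geqs 1} \ld^{n-1} x^n$ converges uniformly in $\ld \in [-1,1]$ for any fixed $x$ with $|x|$ small, so if two probability measures $\mu_1, \mu_2$ give the same function $g$ then matching Taylor coefficients at $x=0$ yields $\int_{-1}^{1} \ld^{n-1}\, d(\mu_1 - \mu_2)(\ld) = 0$ for every $n \geqs 1$. Thus every moment of the signed measure $\mu_1 - \mu_2$ vanishes, and density of polynomials in $C([-1,1])$ (Stone-Weierstrass) gives $\mu_1 = \mu_2$.

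The main obstacle is bridging the qualitative content of Krein-Milman (that $\K$ is the closed convex hull of its extreme points) and the quantitative integral formula. The clean parametrization of the extreme points by the compact metric space $[-1,1]$ supplied by Lemma \ref{Lemma 2.7.4} is what makes this feasible: it turns the closed convex hull into the concrete image of $\mathcal{P}([-1,1])$ under the pointwise-continuous affine map $\Phi$, which neatly avoids invoking the full Choquet theorem.
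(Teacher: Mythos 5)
Your proposal is correct and follows essentially the same route as the paper: normalize into $\K$, use the compactness of $\K$ and the identification of its extreme points, combine Krein--Milman with Banach--Alaoglu compactness of the probability measures to get existence, and prove uniqueness by expanding $x/(1-\ld x)$ as a power series, matching moments, and applying Stone--Weierstrass. The only (cosmetic) difference is that you package the compactness step as ``the image of $\mathcal{P}([-1,1])$ under the pointwise-continuous affine map $\Phi$ is closed, convex, and contains the extreme points, hence contains $\K$,'' whereas the paper extracts a weak-$*$ convergent subnet of finitely supported measures approximating $f$; both arguments use exactly the same ingredients.
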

\begin{proof}
    Since $f'>0$, by considering $(f-f(0))/f'(0)$ we can assume that $f \in \K$.
    For each $\ld \in [-1,1]$, let $\phi_{\ld}(x) = x/(1-\ld x)$, $x \in (-1,1)$.
    Lemmas \ref{Lemma 2.7.3} and \ref{Lemma 2.7.4} mean that $\K$ is convex and compact and the extreme points of
    $\K$ are of the form $\phi_{\ld}$ for some $\ld \in [-1,1]$.
    The Krein-Milman theorem says that $\K$ is the closure of the convex hull $\E$ of $\{\phi_{\ld}: \ld \in [-1,1]\}$.
    Let $\{f_i\}$ be a net in $\E$ such that $f_i(x) \to f(x)$ for all $x \in (-1,1)$.
    Each $f_i$ can be written as
    \begin{align*}
        f_i (x) = \int_{[-1,1]} \phi_{\ld} (x)\, d\mu_i (\ld), \quad x \in (-1,1)
    \end{align*}
    with a probability measure $\mu_i$ on $[-1,1]$ with finite support.
    Recall the Riesz-representation theorem that the Banach space $M[-1,1]$ of complex Borel measures on $[-1,1]$
    is the dual space of the space $C[-1,1]$
    of complex-valued continuous functions on $[-1,1]$.
    The Banach space $M_1 [-1,1]$ of probability Borel measures on $[-1,1]$ is compact
    in the weak$^*$ topology when considered as a subset of $M[-1,1]$ by Banach-Alaoglu theorem.
    Since $\K$ is compact, by taking a subnet we may assume that the net $\mu_i$ converges in the weak$^*$ topology
    to a net $\mu \in M_1 [-1,1]$. For each $x \in (-1,1)$, since $\ld \mapsto \phi_{\ld}(x)$ is continuous,
    we have
    \begin{align*}
	f(x) = \lim_i f_i(x) = \lim_i \int_{-1}^1 \phi_{\ld} (x) \,d\mu_i(\ld) =  \int_{-1}^1 \phi_{\ld} (x) \,d\mu(\ld).
    \end{align*}
    To prove the uniqueness of the measure, let $\mu_1, \mu_2 \in M_1 [-1,1]$ be such that
\begin{align*}
	\lim_i \int_{-1}^1 \phi_{\ld} (x) \,d\mu_1(\ld) = f(x) = \lim_i \int_{-1}^1 \phi_{\ld} (x) \,d\mu_2(\ld),
	\quad x \in (-1,1).
\end{align*}
	Note that $\phi_{\ld}(x) = \sum_{k=0}^{\infty} x^{k+1}\ld^k$ is uniformly convergent in $\ld \in [-1,1]$
	for any $x \in (-1,1)$ fixed. Then
	\begin{align*}
		\sum_{k=0}^{\infty} x^{k+1} \int_{-1}^1 \ld^k \,d\mu_1(\ld)
		= \sum_{k=0}^{\infty} x^{k+1} \int_{-1}^1 \ld^k \,d\mu_2(\ld), \quad x \in (-1,1).
	\end{align*}
	Hence, $\int_{-1}^1 \ld^k\, d\mu_1(\ld) = \int_{-1}^1 \ld^k\, d\mu_2(\ld)$ for all $k=0,1,2,\dots$.
	Thus, $\int_{-1}^1 p(\ld)\, d\mu_1(\ld) = \int_{-1}^1 p(\ld)\, d\mu_2(\ld)$ for all polynomials $p$ on $[-1,1]$.
	The Stone-Weierstrass theorem implies that
	\begin{align*}
		\int_{-1}^1 f(\ld)\, d\mu_1(\ld) = \int_{-1}^1 f(\ld)\, d\mu_2(\ld)
	\end{align*}
	for all $f \in C[-1,1]$.
	This implies $\mu_1 = \mu_2$ by the Riesz Representation Theorem.
\end{proof}

\begin{proof}[Proof of Theorem \ref{Theorem 2.7.6}]
    ($\Leftarrow$) For each $\ld \in [0,\infty)$,
	\begin{align*}
		t \mapsto \frac{t(1+\ld)}{t+\ld} = 1+\ld- \frac{\ld (1+\ld)}{t+\ld}
	\end{align*}
	is operator monotone function on $[0,\infty)$. It follows that if $A \geqs B$ in $M_n^+$, we have
    \begin{align*}
        f(A) = \int_{[0,\infty]} (1+\ld)A(A+\ld I)^{-1} \, d\mu(\ld)
        \geqs \int_{[0,\infty]} (1+\ld)B(B+\ld I)^{-1} = f(B).
    \end{align*}
	\noindent ($\Rightarrow$) Assume that $f$ is operator monotone
	on $[0,\infty)$. Transform $f(t)$ on $(0,\infty)$ to an operator monotone function
	$g(x) := f(\psi (x))$ on $(-1,1)$ by
	\begin{align*}
		t = \psi(x) = \frac{1+x}{1-x} = -1 + \frac{2}{1-x}: (-1,1) \to (0,\infty).
	\end{align*}
	Theorem \ref{Theorem 2.7.6} implies that there is a probability Borel measure $\mu$ on $(-1,1)$ such that
	\begin{align*}
		g(x) = g(0) + g'(0)\int_{[-1,1]} \frac{x}{1-\ld x} \,d\mu(\ld),
		\quad x \in (-1,1).
	\end{align*}
	Then
	\begin{align*}
		0 \leqs f(x) = g(-1) &= g(0)+g'(0)\lim_{x \to -1^+}
		\int_{[-1,1]} \frac{x}{1-\ld x} \,d\mu{\ld}  \\
		&= - \int_{[-1,1]} \frac{1}{1+ \ld} \,d\mu{\ld}
	\end{align*}
	and in particular $\mu(\{-1\})=0$.
	Hence
	\begin{align*}
		g(x)-g(-1) = g'(0)\int_{(-1,1]} \frac{1+x}{(1-\ld x)(1+ \ld)} \,d\mu{\ld}.
	\end{align*}
	Transform this to the term of $f(t)$ by $x=\psi^{-1}(t)$ and $\ld= \psi^{-1}(\xi)$
	and introducing the measure $\mu$ on $(0,\infty]$ by
	\begin{align*}
		m:= \tilde{\mu} \circ \psi^{-1}, \quad \text{ where } \quad
		\tilde{\mu}(\ld) := \frac{g'(0)}{1+\ld} \,d\mu(\ld).
	\end{align*}
	We now have
	\begin{align*}
		f(t)-f(0) = \int_{(0,\infty]} \frac{t(1+\xi)}{t+\xi} \,dm(\xi),
		\quad t \in [0,\infty)
	\end{align*}
	and hence
	\begin{align*}
		f(t) = \int_{[0,\infty]} \frac{t(1+\xi)}{t+\xi} \,dm(\xi),
		\quad t \in [0,\infty)
	\end{align*}
	The uniqueness of the measure $m$ follows from that of $\mu$ in Theorem \ref{Theorem 2.7.6}.
\end{proof}

\begin{ex}
    \begin{enumerate}
        \item   The function $t \mapsto 1$ is associated to the Dirac measure $\delta_0$ at $0$.
        \item   The function $t \mapsto t$ is associated to the Dirac measure $\delta_{\infty}$.
        \item   Recall that the operator monotone function $t^p$ for $0<p<1$ has an integral representation
                \begin{align*}
                    t^p = \frac{\sin p\pi}{\pi} \int_{[0,\infty]} \frac{t}{\ld(t + \ld)}\,d\mu(\ld).
                \end{align*}
                Hence the representing measure of $t^p$ is $\frac{\sin p\pi}{\pi} \cdot \frac{\ld^{p-1}}{1+\ld} \,d \ld$.
    \end{enumerate}
\end{ex}

\end{document}